\newtheorem{remark}{Remark}
\newcommandx{\mytodo}[2][1=]{\todo[inline,linecolor=blue,backgroundcolor=blue!25,bordercolor=blue,#1]{#2}}
\newcommandx{\tothink}[2][1=]{\todo[inline,linecolor=green,backgroundcolor=green!15,bordercolor=green!20!black,#1]{#2}}
\newcommandx{\mycomment}[2][1=]{\todo[inline,linecolor=red,backgroundcolor=red!25,bordercolor=red,#1]{#2}}
\title{A Positive Asymptotic Preserving Scheme for Linear Kinetic Transport Equations%
	\thanks
	{This manuscript has been authored, in part, by UT-Battelle, LLC, under Contract No. DE-AC0500OR22725 with the U.S. Department of Energy. The United States Government retains and the publisher, by accepting the article for publication, acknowledges that the United States Government retains a non-exclusive, paid-up, irrevocable, world-wide license to publish or reproduce the published form of this manuscript, or allow others to do so, for the United States Government purposes. The Department of Energy will provide public access to these results of federally sponsored research in accordance with the DOE Public Access Plan (\texttt{http://energy.gov/downloads/doe-public-access-plan}).}
}
\date{\today}
\author{M. Paul Laiu%
\thanks{Computational Mathematics Group,
				Computer Science and Mathematics Division,
				Oak Ridge National Laboratory,
				Oak Ridge, TN 37831 USA, (\texttt{laiump@ornl.gov}).
Supported by the U.S. Department of Energy, under the SCGSR program administered by the Oak Ridge Institute for Science and Education under Contract No. DE-AC05-06OR23100.}       
\and
Martin Frank%
\thanks{
Karlsruhe Institute of Technology, 
           Steinbuch Center for Computing, 
           Hermann-von-Helmholtz-Platz 1,
           76344 Eggenstein-Leopoldshafen, Germany, (\texttt{martin.frank@kit.edu}).
}
\and
        Cory D. Hauck%
\thanks{Computational Mathematics Group,
				Computer Science and Mathematics Division,
				Oak Ridge National Laboratory,
				Oak Ridge, TN 37831 USA, (\texttt{hauckc@ornl.gov}).
This author's research was sponsored by the Office of Advanced Scientific
Computing Research and performed at the Oak Ridge National Laboratory,
which is managed by UT-Battelle, LLC under Contract No. DE-AC05-00OR22725.}
}
\begin{document}

\maketitle
\begin{abstract}
We present a positive and asymptotic preserving numerical scheme for solving linear kinetic, transport equations that relax to a diffusive equation in the limit of infinite scattering.
The proposed scheme is developed using a standard spectral angular discretization and a classical micro-macro decomposition.
The three main ingredients are a semi-implicit temporal discretization, a dedicated finite difference spatial discretization, and {\limname} limiters in the angular discretization.
Under mild assumptions on the initial condition and time step, the scheme becomes a consistent numerical discretization for the limiting diffusion equation when the scattering cross-section tends to infinity.  
The scheme also preserves positivity of the particle concentration on the space-time mesh and therefore fixes a common defect of spectral angular discretizations.
The scheme is tested on the well-known line source benchmark problem with the usual uniform material medium as well as a medium composed from different materials that are arranged in a checkerboard pattern. 
We also report the observed order of space-time accuracy of the proposed scheme.
\end{abstract}

\begin{keywords}
kinetic transport equations, diffusion limit, positive-preserving schemes, asymptotic preserving schemes, finite difference methods
\end{keywords}

\begin{AMS}
35B09, 35L40, 41A60, 65M06, 65M70, 82C70, 82D75 
\end{AMS}




\section{Introduction}
\label{sec:intro}

Kinetic transport equations are widely used to model particle systems in many applications, including thermal radiative transfer \cite{Pomraning-1973,Mihalis-Mihalis-1999}, rarefied gas dynamics \cite{cercignani1988boltzmann}, plasmas \cite{hazeltine2018framework} and neutron transport \cite{Lewis-Miller-1984,Davison-1957}. 
These equations track the temporal evolution of a particle distribution function in a position-velocity phase space. 

For kinetic equations that model propagation through a background medium, the kinetic distribution function is often approximated by the solution of a much simpler diffusion equation.  Such an approximation is accurate when the dynamics of the particle system are dominated by scattering interactions with the medium.  However, many problems of interest are ``multiscale" in the sense that scattering and other material cross-sections may vary in space by several orders of magnitude. 
In regions of moderate scattering, the diffusion approximation may not be sufficiently accurate, while in strongly scattering regions, classical numerical schemes for kinetic equations must resolve collisional length scales, making them computationally prohibitive.  In addition, explicit time integrators for kinetic equations in scattering dominated regimes require very small time steps in oder to maintain accuracy and stability, while implicit time integrators can be exceptionally stiff.
For these reasons, it is desirable to solve multi-scale problems with numerical schemes that are consistent in both the kinetic and diffusive regions and uniformly stable under a reasonable time-step restriction. Such schemes are often referred to as ``asymptotic preserving" (AP) schemes \cite{Jin-1999}.

Asymptotic preserving schemes for kinetic equations with diffusion limits were first considered in the context of steady-state neutron transport \cite{Larsen-Morel-Miller-1987,Larsen-Morel-1989}.  Since then, a variety of approaches have been taken, including discontinuous Galerkin methods \cite{Larsen-Morel-1989,Adams-2001,Guermond-Kanschat-2010,McClarren-Lowrie-2008,hauck2010methods}, methods based on even-odd parity
\cite{Miller-Warren-1991,Jin-Pareschi-Toscani-1998,Jin-Pareschi-Toscani-2000,Seibold-Frank-2014}, micro-macro decompositions \cite{Lemou-Mieussens-2008,Liu-Mieussens-2010}, numerical fluxes that depend on the scattering cross-section \cite{Jin-Levermore-1996,Buet-Despres-Franck-2012}, temporal regularization \cite{Jin-1999,Hauck-Lowrie-2009}, well-balanced schemes \cite{Gosse-Toscani-2002,Gosse-Toscani-2004}, and unified gas-kinetic schemes \cite{Xu-Huang-2010,Mieussens-2013}.  Many of these approaches are related or overlapping, and despite any differences, they all seek to address two fundamental issues:  first, that the numerical dissipation induced by the discretization of the hyperbolic advection operator in the kinetic equation must be controlled in the diffusion limit;  second, that for time-dependent problems, stiffness must be overcome either with semi-implicit time integrators \cite{boscarino2013implicit} or with fully implicit time integrators that leverage acceleration and/or preconditioning techniques \cite{adams2002fast,larsen2010advances}.

Deterministic numerical simulations of kinetic equations require discretization in space, velocity, and time. 
Spherical harmonic ({\pn}) methods \cite{Case-Zweifel-1967,Lewis-Miller-1984,Pomraning-1973} discretize the angular component of the velocity using a polynomial approximation with coefficients that are functions of space and time.
The kinetic equations are then converted using the standard Galerkin approach into a system of reduced equations for these coefficients.
As with other spectral methods, approximate solutions generated in this way converge spectrally to the solution of the kinetic equation when the latter is sufficiently smooth.
However, when the solution to the kinetic equation is discontinuous, the {\pn} method produces oscillatory solutions which may cause the approximation of the particle concentration (defined as the integral of the kinetic distribution over the velocity variable) to become negative.

In \cite{Hauck-McClarren-2010}, filtering techniques \cite{Gottlieb-Gottlieb-Hesthaven-2007,Guo-1998} for mitigating the Gibbs phenomena in spectral approximations were proposed as a way to reduce oscillations in the solution of the {\pn} equations.
Later in \cite{Radice-2013}, this idea was used to derive a system of modified \pn~equations, referred to as Filtered {\pn} equations.
While filtering significantly reduces oscillations in the profile of the particle concentration, negative values are still possible.
Thus in \cite{LH-2016}, several positive-preserving schemes were proposed to augment the filtering strategy.
These schemes combine a second-order, explicit, finite-volume discretization in space and time \cite{AHT10,GH12} with limiters that force the spectral approximation in angle to be non-negative on a finite set of quadrature points in the angular domain.  The schemes do preserve positivity of the particular concentration.  However, the limiters may reduce accuracy or be computationally expensive, and the finite-volume discretization is not AP.

In this paper, we propose a positive-preserving AP scheme for solving the {\fpn} equations.
The proposed scheme follows the approach proposed in \cite{Lemou-Mieussens-2008} and analyzed in \cite{Liu-Mieussens-2010}, where a one-dimensional kinetic equation was solved using a classical micro-macro decomposition \cite{liu2004boltzmann} of the kinetic distribution.
Here we use the micro-macro decomposition to formulate the {\fpn} equations as a coupled system for the expansion coefficients that correspond to the macro and micro parts of the kinetic distribution.
The numerical scheme we use to solve the micro-macro system involves three main ingredients -- a semi-implicit temporal discretization, a dedicated finite difference spatial discretization, and {\limname} limiters in the angular discretization.  The space-time discretization is designed so that the {\limname} limiters in the angular discretization are physically reasonable, but also less strict than the pointwise limiters used in \cite{LH-2016}.
In designing the spatial discretization, we focus on a simplified geometry that allows for a formulation in two space dimensions.  However, we also discuss how to extend the proposed numerical scheme to the full three-dimensional setting.

The remainder of the paper is organized as follows.  In Section~\ref{sec:Kinetic_FPN_diffusion_limit}, we introduce the linear kinetic equation, the {\fpn} equations, their diffusion limits, and the derivation of the associated micro-macro systems.
In Section~\ref{sec:Full_disc}, we present the space-time discretization for the micro-macro {\fpn} system and show that, under mild assumptions on the initial condition and time step, the fully discretized scheme gives a consistent explicit numerical scheme for the diffusion limit when the scattering cross-section tends to infinity.
In Section~\ref{sec:Positivity}, we give sufficient conditions for preserving positivity of the particle concentration and we detail the approach, including the {\limname} limiters and time-step restriction needed to enforce these conditions. 
We test the proposed scheme on two benchmark problems in both kinetic and diffusive regimes and report the results in Section~\ref{sec:num_results}.
Conclusions and discussion are given in Section~\ref{sec:conclusion}.

\section{Linear kinetic equations, {\fpn} equations, and the diffusion limits}
\label{sec:Kinetic_FPN_diffusion_limit}

\subsection{Linear kinetic equation and its diffusion limit}

We consider the linear kinetic transport equation 
\begin{equation}
\p_t f + \Omega \cdot\nabla_r f = \sig{s}\fM - \sig{t}f\:,
\label{eq:transport}
\end{equation}
for the kinetic distribution function $f = f(r,\Omega,t)$.  Here $r=(x,y,z)\in\bbR^3$ is the position; $\Omega=(\Omega_x,\Omega_y,\Omega_z)\in\bbS^2$ is the angle; $\sig{s}(r)\geq\sig{s}^{\min}>0$, $\sig{a}(r)\geq0$, and $\sig{t}=\sig{s}+\sig{a}$ are respectively the scattering, absorption, and total cross-sections; $\fM(r,t)=({4\pi})^{-1}\vint{f}$, where $\vint{\cdot}$ denotes integration over $\bbS^2$ with respect to $\Omega$, is the angular average of $f$.  We denote the particle concentration associated to $f$ by $\rho=\vint{f}=4\pi\fM$.
With appropriate initial and boundary conditions, \eqref{eq:transport} is known to have a unique solution \cite{Dautray-Lions-2000}.

Given $\epsilon >0$, letting $\sig{s} \to \epsilon ^{-1} \sig{s}$, $\sig{a} \to \epsilon \sig{a}$, and $t \to \epsilon^{-1} t$ in \eqref{eq:transport} 
leads to the scaled equation%
\begin{equation}
\epsilon\p_t f + \Omega \cdot\nabla_r f = \frac{\sig{s}}{\epsilon}\left(\fM- f\right) - \epsilon\sig{a} f\:.
\label{eq:transport_scaled}
\end{equation}
It is well-known \cite{Habetler-Matkowsky-1975,Larsen-Keller-1974} that when $\epsilon\ll 1$, the kinetic distribution $f$ in \eqref{eq:transport_scaled} is given by $f=\fM+O(\epsilon)$.
Meanwhile, the particle concentration $\rho=4\pi \fM$ is governed approximately by a diffusion equation
\begin{equation}
\p_t \rho - \nabla_r \cdot\left(D \nabla_r \rho\right) + \sig{a} \rho = O(\epsilon) \:,
\label{eq:diffusion}
\end{equation}
where the matrix of diffusion coefficients $D$ is given by 
\begin{equation}
D = \frac{1}{4\pi\sig{s}}\diag
\left({\vint{\Omega_x^2}},\, {\vint{\Omega_y^2}},\, {\vint{\Omega_z^2}}\right) = \frac{1}{3\sig{s}} I_{3\times 3}\:.
\label{eq:D_def}
\end{equation}
When $\epsilon\to0$, the right-hand side of \eqref{eq:diffusion} vanishes, and the resulting equation \eqref{eq:diffusion} is referred to as the diffusion limit for \eqref{eq:transport_scaled}.

\subsection{{\pn} and {\fpn} equations and their diffusion limit}
The {\pn} method \cite{Case-Zweifel-1967,Lewis-Miller-1984} approximates the kinetic distribution by a polynomial expansion in $\Omega$ with coefficients that are functions of space and time.
When the angular space is the unit sphere, spherical harmonics are commonly used as the basis for spectral approximations.
Specifically, let $\bbP_N(\bbS^2)\subset L^2(\bbS^2)$ be the vector space of polynomials in $\Omega$ with degree at most $N$, and let $\bfm\colon \bbS^2 \to \bbR^n$, where $n=\text{dim}(\bbP_{N}(\bbS^2))$, be a vector-valued function that takes the form $\bfm:=[m_0^0,m_1^{-1},m_1^{0},m_1^{1},\dots]^T$, where $m_\ell^k$ denotes the real-valued spherical harmonic of degree $\ell$ and order $k$, normalized such that $\vint{m_{\ell}^k m_{\ell^\prime}^{k^\prime}}=\delta_{\ell,\ell^\prime}\cdot\delta_{k,k^\prime}$ with $\delta_{\ell,\ell^\prime}$ the Kronecker delta function.
For example, the first few components of $\bfm$ are 
\begin{equation}
m_0^0 = \sqrt{\frac{1}{4\pi}}\:,\quad m_1^{-1} = \sqrt{\frac{3}{4\pi}} \Omega_y \:,\quad m_1^{0} = \sqrt{\frac{3}{4\pi}} \Omega_z\:,\quad m_1^{1} = \sqrt{\frac{3}{4\pi}} \Omega_x\:.
\label{eq:sph_har}
\end{equation}
The components of $\bfm$ form an orthonormal basis of $\bbP_{N}(\bbS^2)$, and the scaled {\pn} equations corresponding to \eqref{eq:transport_scaled} are given by
\begin{equation}
\epsilon\p_t\upn+ \Vint{\bfm\bfm^T \Omega} \cdot \nabla_r \upn = -\frac{\sig{s}}{\epsilon} R \upn - \epsilon\sig{a} \upn  \:,
  \label{eq:pn}
\end{equation}
where $R=\diag([0,1,\dots,1])$, and
\begin{equation}
\Vint{\bfm\bfm^T\Omega} \cdot \nabla_r := \Vint{\bfm\bfm^T \Omega_x} \p_x + \Vint{\bfm\bfm^T \Omega_y} \p_y + \Vint{\bfm\bfm^T \Omega_z} \p_z\:.
\end{equation}
The solution $\upn\colon \bbR^3 \times \bbR^+ \to \bbR^n$ to \eqref{eq:pn} is an approximation to the spectral expansion coefficients of $f$, and the initial condition is given by $\upn(r,0):=\Vint{\bfm f(r,\Omega,0)}$.
The {\pn} equations form a symmetric, linear hyperbolic system of PDEs.

When the solution to \eqref{eq:transport_scaled} is not smooth, the {\pn} method produces oscillatory solutions \cite{Brunner-2002, GH12}.
To reduce oscillations, a filtering term was introduced into \eqref{eq:pn} in \cite{Radice-2013}, resulting in the following system of modified equations:
\begin{equation}
\epsilon\p_t\ufpn+\Vint{\bfm\bfm^T \Omega} \cdot \nabla_r \ufpn = -\frac{\sig{s}}{\epsilon} R \ufpn - \epsilon\sig{a} \ufpn -\epsilon{\sigF}F\ufpn \:,
  \label{eq:fpn}
\end{equation}
where ${\sigF}>0$ is a filtering parameter, the filtering matrix $F\in\bbR^{n\times n}$ is a diagonal matrix with elements $F_{(\ell,k),(\ell,k)} = - \ln\left(\kappa\left(\frac{\ell}{N+1}\right)\right)$, and $\kappa\colon\bbR^+\rightarrow[0,1]$ is a filter function with $\kappa(0)=1$. (See, for example, \cite{FHK-14} for a detailed definition of $\kappa$.)
The solution $\ufpn \colon \bbR^3 \times \bbR^+ \to \bbR^n$ to \eqref{eq:fpn} is also an approximation to the spectral expansion coefficients of $f$, and the initial condition is given by $\ufpn(r,0)=\Vint{\bfm f(r,\Omega,0)}$.
The modified equations \eqref{eq:fpn}, referred to as the filtered {\pn} ({\fpn}) equations \cite{Radice-2013,FHK-14}, also form a linear hyperbolic system.
Analogous to \eqref{eq:diffusion}, the diffusion limit of \eqref{eq:fpn} is given by
\begin{equation}
\p_t \bufpn - \nabla_r \cdot\left(D \nabla_r \bufpn \right) + \sig{a} \bufpn  = 0 \:,
\label{eq:diffusion_moment}
\end{equation}
where $D$ is as defined in \eqref{eq:D_def} and $\bufpn$ denotes the first component of $\ufpn$.
Note that the filtering term ${\sigF}F\ufpn$ in \eqref{eq:fpn} is scaled such that it vanishes as $\epsilon\to0$, since the solution is generally not oscillatory in the diffusion limit.

\subsection{The micro-macro decomposition}
\label{subsec:Micro_Macro}

For the scaled kinetic equation \eqref{eq:transport_scaled}, the kinetic distribution $f$ can be decomposed into $f(r,\Omega,t)=\fM(r,t) + \epsilon \fm(r,\Omega,t)$ (see, e.g., \cite{Lemou-Mieussens-2008} for details), where the macro component $\fM$ is constant with respect to $\Omega$ and the micro component $\fm$ satisfies $\vint{\fm}=0$.
The governing equations for $\fM$ and $\fm$ are 
\begin{subequations}
\begin{align}
\p_t \fM + \frac{1}{4\pi}\vint{\Omega \cdot\nabla_r \fm} + \sig{a} \fM &= 0\:,
\label{eq:macro}\\
\p_t \fm + \frac{1}{\epsilon}\Omega \cdot\nabla_r \fm - \frac{1}{4\pi\epsilon}\vint{\Omega \cdot\nabla_r \fm} + \sig{a} \fm 
&= - \frac{\sig{s}}{\epsilon^2} \fm  - \frac{1}{\epsilon^2}\Omega \cdot\nabla_r \fM\:.
\label{eq:micro}
\end{align}

\end{subequations}

We apply a micro-macro decomposition to the {\fpn} equations \eqref{eq:fpn}.
Specifically, we decompose $\ufpn\in\bbR^n$ into the macro expansion coefficients $\uM\in \bbR$ and micro expansion coefficients $\um\in \bbR^{\nm}$, where $\nm:=n-1$ and $\ufpn=[\uM,\epsilon\um^T]^T$.
To simplify the notation, we drop subscripts and set  $\uF=\ufpn$ for the remainder of this paper.
We also let $\mM$ denote $m_0^0=(4\pi)^{-\frac{1}{2}}$ and let $\mm\colon\bbS^2\to\bbR^{\nm}$ denote the remaining components of $\bfm$, i.e., $\bfm=:[\mM,\mm^T]^T$.
Then, similar to \eqref{eq:macro}--\eqref{eq:micro}, $\uM$ and $\um$ are governed by the \textit{micro-macro system}
\begin{subequations}
\begin{align}
\label{eq:macro_moment}
\p_t \uM + \vint{\mM \mm^T \Omega} \cdot\nabla_r \um + \sig{a} \uM &= 0\:,\\
\label{eq:micro_moment}
\p_t \um + \frac{1}{\epsilon}\vint{\mm\mm^T \Omega} \cdot\nabla_r \um  + \sig{a} \um 
&= - \frac{\sig{s}}{\epsilon^2} \um  - {\sigF}\Fm\um - \frac{1}{\epsilon^2}\vint{\mm\mM\Omega} \cdot\nabla_r \uM\:,
\end{align}
where $\Fm\in\bbR^{\nm\times\nm}$ is formed by removing the first column and first row of $F$.
\end{subequations}

\section{Space-time discretization}
\label{sec:Full_disc}
We present a semi-implicit time discretization for the micro-macro system \eqref{eq:macro_moment}--\eqref{eq:micro_moment} in Section~\ref{subsec:Time_discr}.
In Section~\ref{subsec:reduced_kinetic_eqn}, we introduce a reduced two-dimensional linear kinetic equation that is considered in the remainder of this paper. 
The finite difference spatial discretization for the reduced equation is given in Section~\ref{subsec:Space_discr}.
We verify the AP property of the fully discretized micro-macro scheme in Section~\ref{subsec:spatial_discretization_AP}.
Extensions of the proposed spatial discretization to the original three-dimensional kinetic equation are discussed later in Section~\ref{subsec:3D}.

\subsection{Time discretization}
\label{subsec:Time_discr}

To discretize \eqref{eq:macro_moment} and \eqref{eq:micro_moment} in time, we assume a uniform time step $\dt$ with time levels $t^n:=n\dt$ and let $\uMe\approx\uM(t^n,\cdot)$ and $\ume\approx\um(t^n,\cdot)$ satisfy 
\begin{subequations}
\begin{align}
\label{eq:macro_time_discre}
\frac{\uMi-\uMe}{\dt} + \vint{\mM \mm^T\Omega} \cdot\nabla_r \umi + \sig{a} \uMi &= 0\:,
\\
\label{eq:micro_time_discre}
\frac{\umi-\ume}{\dt} + \frac{1}{\epsilon}\vint{\mm\mm^T\Omega} \cdot\nabla_r \ume  +  \left(\sig{a}+\frac{\sig{s}}{\epsilon^2}+{\sigF}\Fm\right) \umi 
&=   - \frac{1}{\epsilon^2}\vint{\mm\mM\Omega} \cdot\nabla_r \uMe\:.
\end{align}
\end{subequations}
We rewrite \eqref{eq:micro_time_discre} as 
\begin{equation}
\umi
=  \GammaM \ume - \frac{\dt}{\epsilon} \GammaM \vint{\mm\mm^T\Omega} \cdot\nabla_r \ume - \frac{\dt}{\epsilon^2} \GammaM \vint{\mm\mM\Omega} \cdot\nabla_r \uMe\:,
\label{eq:micro_time_discre_gamma}
\end{equation}
where $\GammaM\in\bbR^{\nm\times\nm}$ is a non-singular, diagonal matrix with elements
\begin{equation}
\GammaM_{(\ell,k),(\ell,k)} = \epsilon^2 \left(\epsilon^2(  1  + \sig{a}\dt + {\sigF}\dt\Fm_{(\ell,k),(\ell,k)}) + \sig{s}\dt \right)^{-1}  \in (0,1) \:. 
\label{eq:Gamma_def}
\end{equation} 
To obtain an explicit update for \eqref{eq:macro_time_discre}, we replace the implicit term $\umi$ in \eqref{eq:macro_time_discre} with the right-hand side of \eqref{eq:micro_time_discre_gamma}.
This gives
\begin{subequations}
\label{eq:time_discre}
\begin{equation}
\begin{alignedat}{2}
(1+\sig{a}\dt)\uMi &= \uMe - \dt\vint{ \mM \mm^T \Omega} \cdot\nabla_r (\GammaM \ume) \\
&\quad + \frac{\dt^2}{\epsilon}    \vint{ \mM \mm^T \Omega} \cdot \nabla_r \left( \GammaM \vint{\mm\mm^T\Omega} \cdot \nabla_r \ume \right)\\
&\quad + \frac{\dt^2}{\epsilon^2} \vint{ \mM \mm^T \Omega} \cdot \nabla_r \left( \GammaM \vint{\mm\mM\Omega} \cdot \nabla_r \uMe\right)\:.
\end{alignedat}
\label{eq:macro_time_discre_gamma}
\end{equation}
Since $\sig{a}$ and $\sig{s}$ are functions of $r$, to avoid non-conservative products in \eqref{eq:micro_time_discre_gamma},
we perform a change of variables before and after solving \eqref{eq:micro_time_discre_gamma}.
Specifically, we update $\umi$ by computing
\begin{equation}
\begin{alignedat}{2}
\vme\quad &= \epsilon^2\GammaM^{-1} \ume\:,\\
\vmi
&=  \GammaM \vme - \frac{\dt}{\epsilon} \vint{\mm\mm^T\Omega} \cdot\nabla_r (\GammaM\vme) - \dt \vint{\mm\mM\Omega} \cdot\nabla_r \uMe\:,\\
\umi &= \epsilon^{-2}\GammaM \vmi\:.
\end{alignedat}
\label{eq:micro_time_discre_gamma_v}
\end{equation}
\end{subequations}

\subsection{Reduced linear kinetic equation and the micro-macro system}
\label{subsec:reduced_kinetic_eqn}

For the remainder of the paper, we restrict ourselves to a reduced two-dimensional linear kinetic equation that is valid when $\p_{z} f = 0$: 
\begin{equation}
\epsilon\p_t f + \Ox \p_{x} f + \Oy \p_{y} f = \frac{\sig{s}}{\epsilon}\left(\frac{1}{4\pi}\vint{f}- f\right) - \epsilon\sig{a} f\:.
\label{eq:transport_reduced}
\end{equation} 
In this setting, the micro-macro system \eqref{eq:macro_moment}--\eqref{eq:micro_moment} becomes
\begin{subequations}
\label{eq:moment_reduced}
\begin{align}
\p_t \uM + \,&(\vint{\mM \mm^T\Ox} \p_{x} + \vint{ \mM \mm^T \Oy} \p_{y}) \um + \sig{a} \uM = 0\:,
\label{eq:macro_moment_reduced}
\\
\p_t \um + \,&\frac{1}{\epsilon} (\vint{\mm\mm^T\Ox} \p_{x} + \vint{\mm\mm^T\Oy} \p_{y})\um  + \sig{a} \um \nonumber \\
&\quad= - \frac{\sig{s}}{\epsilon^2} \um  - {\sigF}\bar{F}\um - \frac{1}{\epsilon^2}(\vint{\mm\mM\Ox} \p_{x} + \vint{\mm\mM\Oy} \p_{y})\uM\:.
\label{eq:micro_moment_reduced}
\end{align}
\end{subequations}%
Applying the time discretization in \eqref{eq:time_discre} to \eqref{eq:moment_reduced} leads to the reduced scheme
\begin{subequations}
\begin{align}
(1+\sig{a}\dt)&\uMi = \uMe - {\dt}(\ax^T \p_{x} + \ay^T \p_{y}) (\GammaM \ume)
+ \frac{\dt^2}{\epsilon}
 \nabla_{(x,y)}\cdot\left( \Qm \nabla_{(x,y)} \ume\right) \nonumber\\
&\qquad\qquad\,\,\,+ \frac{\dt^2}{\epsilon^2} \nabla_{(x,y)}\cdot\left( \QM \nabla_{(x,y)} \uMe\right) \:,
 \label{eq:macro_time_discre_2D}\\
&\vme \quad= \epsilon^2\GammaM^{-1} \ume\:,\nonumber\\ 
&\vmi
=  \GammaM \vme - \frac{\dt}{\epsilon} (\Ax \p_x + \Ay \p_y) (\GammaM\vme) - \dt (\ax \p_x + \ay \p_y)\uMe\:, 
\label{eq:micro_time_discre_2D}\\
&\umi = \epsilon^{-2}\GammaM \vmi\:,\nonumber
\end{align}
\end{subequations}
where $\ax:=\vint{\mM\mm\Ox}\in\bbR^{\nm}$, $\ay:=\vint{\mM\mm\Oy}\in\bbR^{\nm}$, $\Ax:=\vint{\mm\mm^T\Ox}\in\bbR^{\nm\times\nm}$, $\Ay:=\vint{\mm\mm^T\Oy}\in\bbR^{\nm\times\nm}$, and $\Qm$ and $\QM$ are $2$-by-$2$ block matrices:
\begin{equation}
\arraycolsep=1.6pt\def\arraystretch{1.5}
\Qm:=\left(\begin{array}{cc}
\tQxx & \tQxy\\
\tQyx & \tQyy
\end{array}\right)
=\left(\begin{array}{cc}
\ax^T\GammaM \Ax & \ax^T\GammaM \Ay\\
\ay^T\GammaM \Ax & \ay^T\GammaM \Ay
\end{array}\right)
=\left(\begin{array}{cc}
\g \axx^T & \g \axy^T\\
\g \ayx^T & \g \ayy^T
\end{array}\right)
\label{eq:Qm_def}
\end{equation}
and 
\begin{equation}
\arraycolsep=1.6pt\def\arraystretch{1.5}
\QM:=\left(\begin{array}{cc}
\bQxx & \bQxy\\
\bQyx & \bQyy
\end{array}\right)
=\left(\begin{array}{cc}
\ax^T\GammaM \ax & \ax^T\GammaM \ay\\
\ay^T\GammaM \ax & \ay^T\GammaM \ay
\end{array}\right)
=\left(\begin{array}{cc}
\g \aMxx & \g \aMxy\\
\g \aMyx & \g \aMyy
\end{array}\right)
=\left(\begin{array}{cc}
 \frac{1}{3} \g& 0\\
0 &  \frac{1}{3} \g
\end{array}\right)\:,
\label{eq:QM_def}
\end{equation}
with $\axx:=\vint{\mM\mm\Ox^2}\in\bbR^{\nm}$, $\ayy:=\vint{\mM\mm\Oy^2}\in\bbR^{\nm}$, $\axy=\ayx:=\vint{\mM\mm\Ox\Oy}\in\bbR^{\nm}$, $\aMxx:=\vint{\mM^2\Ox^2}=\frac{1}{3}$, $\aMyy:=\vint{\mM^2\Oy^2}=\frac{1}{3}$, and $\aMxy=\aMyx:=\vint{\mM^2\Ox\Oy}=0$.
The reductions in \eqref{eq:Qm_def} and \eqref{eq:QM_def} follow from direct evaluation 
using the fact that all but one entries of $\ax$ and $\ay$ are zero%
\footnote{This is due to the facts that $\mM$ is a constant, that $\Ox$ and $\Oy$ are scalar multiples of some entries in $\mm$, and that entries of $\bfm$ are orthogonal.}%
, and $\g\in\bbR$ is the diagonal element of $\G$ corresponding to the index of nonzero entry of $\ax$%
\footnote{An equivalent definition of $\g$ is the diagonal element of $\G$ corresponding to the index of nonzero entry of $\ay$. See Appendix~\ref{appendix:diffusion_calculation} for details.}.
The detailed calculation is given in Appendix~\ref{appendix:diffusion_calculation}.

\subsection{Spatial discretization}
\label{subsec:Space_discr}
In this subsection, we introduce the finite difference scheme used to discretize the spatial derivatives in the micro-macro scheme \eqref{eq:time_discre}.
Here we consider $\bbR^2$ as the spatial domain and a uniform mesh on $\bbR^2$ with points $(x_i,y_j)$.
The distances between mesh points in the $x$ and $y$ directions are denoted by $\dx$ and $\dy$, respectively.
We also assume that the aspect ratio $(\dx/\dy)$ of the spatial mesh is bounded from above and away from zero.
We use $w$ and $\bw$ to denote the scalar and vector-valued functions on $\bbR^2$.
For $w\colon \bbR^2\to\bbR$, we denote $w_{i,j}=w(x_i,y_j)$. 
For $\bw\colon \bbR^2\to\bbR^m$, we change the notation and use $\bw_{i,j}$ to denote $\bw(x_i,y_j)$ instead of the entries of $\bw$.
We summarize the spatial discretization for each term in \eqref{eq:time_discre} as follows.

For the advection term in the macro equation \eqref{eq:macro_time_discre_2D}, we use the standard central difference scheme with additional artificial dissipation terms.
For the diffusion terms in \eqref{eq:macro_time_discre_2D}, we adopt the centered symmetric scheme proposed in \cite{Gunter-2005} and modify the scheme by introducing some averaging coefficients into the diffusion stencil.
For the micro equation \eqref{eq:micro_time_discre_2D}, we discretize the micro and macro advection terms with a second-order kinetic upwind scheme and a central difference scheme, respectively.
The artificial dissipation terms and the modified centered symmetric scheme in the discretization of \eqref{eq:macro_time_discre_2D} are needed for proving
the \textit{positive-preserving property} of the scheme.
Specifically, they guarantee that $\uMi\geq0$ on the spatial mesh provided $\uMe\geq0$ on the spatial mesh.

\subsubsection{Macro equation - advection term}
\label{subsubsec:spatial_discretization_hyp}

For the advection term in the macro equation \eqref{eq:macro_time_discre_2D}, we use the central difference scheme with additional artificial dissipation. 
Specifically, the advection term in \eqref{eq:macro_time_discre_2D} is approximated by
\begin{equation}
\left((\ax^T \p_{x} + \ay^T \p_{y}) (\GammaM \ume)\right)_{i,j}
\approx 
(\ax^T \Dcx + \ay^T \Dcy)(\Gc \umec) - \CLxF (\dx^3\ddx +\dy^3 \ddy)\uMec \:,
\label{eq:advection_discretization}
\end{equation}
where $\CLxF$ is the artificial dissipation parameter and $\Dcx$, $\Dcy$ are central difference operators.
For functions $\bw$ on the spatial domain,
\begin{equation}
\Dcx(\wc):=\frac{\we-\ww}{2\dx}\:,
\quand
\Dcy(\wc):=\frac{\wn-\ws}{2\dy}\:.
\label{eq:Dc_def}
\end{equation}
For functions $w = w(x,y)$, we define the artificial dissipation operator in the $x$-direction by 
\begin{equation}
{\ddx}(\wMc):= \frac{1}{{\dx^4}}\left((\wMep - \wMem)-(\wMwp - \wMwm)\right)\:,
\label{eq:ddx_def}
\end{equation}
where 
\begin{equation}
\wMep := \wMe - \frac{\dx}{2} s_{i+1,j}^x\:,\quad
\wMem := \wMc + \frac{\dx}{2} s_{i,j}^x\:,
\label{eq:ddx_edge_values}
\end{equation}
and
\begin{equation}
s_{i,j}^x = \minmod\left\{\theta\frac{ \wMe - \wMc}{\dx},
\frac{\wMe-\wMw}{2\dx}, \theta\frac{\wMc-\wMw}{\dx} \right\}\:.
\label{eq:slope_minmod}
\end{equation}
Here $\theta\in[1,2]$ is a parameter, and the minmod limiter returns the real number with the smallest absolute value in the convex hull of the three arguments (see \cite[Section~16.3]{LeVeque-1992}).
It can be verified that when $\theta=1$, the minmod limiter leads to an inconsistent solution in the diffusion limit (see \cite{McClarren-Lowrie-2008} for relevant discussion in the discontinuous Galerkin setting).
On the other hand, it will be shown in Section~\ref{subsec:pos_proof} that to prove the positive-preserving property, it is required that $\theta<2$. 
Thus, we consider $\theta\in(1,2)$ in the remainder of this paper.
The operator $\ddy$ on the $y$-direction is defined analogously.
In this paper, we choose
\begin{equation}
\CLxF = \Theta \frac{\g_{\max}}{\epsilon}\:,\quad\textup{with}\quad \g_{\max}:=\frac{\epsilon^2}{\epsilon^2+\sig{s}^{\min}\dt}\:,
\label{eq:gamma_max_def}
\end{equation}
where the $\Theta:=\frac{1}{2-\theta}$ and $\sig{s}^{\min}>0$ is the lower bound of $\sig{s}$ on the space.
This choice of $\CLxF$ ensures the positive-preserving property of the scheme. (See Section~\ref{subsec:pos_proof} for details.)

To prove the AP property of the fully discretized scheme (see Section~\ref{subsec:spatial_discretization_AP}), the artificial dissipation needs to vanish as $\epsilon\to0$.
We now show that, under a mild time-step assumption, the choice of $\CLxF$ forces the artificial dissipation to vanish as $\epsilon\to0$.
By Young's inequality,
\begin{equation}
\label{eq:artificial_dissipation_bound}
\CLxF \dx^3 = \Theta\frac{\g_{\max}}{\epsilon}\dx^3=
\Theta\frac{\epsilon\dx^3}{\epsilon^2+\sig{s}^{\min}\dt} \leq 
\Theta\epsilon^{{1}/{3}}\dx\frac{\epsilon^2+2\dx^3}{3(\epsilon^2+\sig{s}^{\min}\dt)} \:,
\end{equation}
and a similar upper bound can be obtained for $\CLxF \dy^3$. 
Also, we note that since $\theta>1$, the minmod limiter always returns the center argument when away from the extrema and $\dx$ is sufficiently small. 
In this case, $\ddx$ is a second-order approximation to $-\frac{1}{4}\p_{x}^4$.
Therefore, under a proper regularity assumption on $\uM$, there exists some constant $M>0$ such that $\ddx \uMc\leq M+O(\dx)$ away from the extrema of $\uM$.
A similar upper bounded can be obtained for $\ddy\uMc$.
Therefore, the artificial dissipation vanishes as $\epsilon\to0$ if the upper bound in \eqref{eq:artificial_dissipation_bound} goes to zero as $\epsilon\to0$.
Suppose that the time step $\dt$ satisfies $\dt\geq C(\dx+\dy)^{3}$ for some constant $C>0$, then the upper bound in \eqref{eq:artificial_dissipation_bound} is an $O(\epsilon^{1/3})$ term and thus the artificial dissipation vanishes as $\epsilon\to0$. 
This time-step assumption is invoked in Section~\ref{subsec:spatial_discretization_AP} for proving the AP property of the proposed scheme, and it will be justified later in Remark~\ref{remark:consistency}, Section~\ref{subsec:pos_cfl}.
Note that \eqref{eq:artificial_dissipation_bound} also implies that the artificial dissipation goes to zero as $\dx\to0$, which preserves the consistency of the proposed scheme as discussed later in Remark~\ref{remark:consistency}.

\subsubsection{Macro equation - diffusion term}
\label{subsubsec:spatial_discretization_dif}

For the two diffusion terms in \eqref{eq:macro_time_discre_2D},
we apply a modified version of the centered symmetric scheme proposed in \cite{Gunter-2005}, which is formally second-order accurate and conservative.
Specifically, we introduce some averaging coefficients into the discretization of the second derivatives, while keeping the mixed derivative discretizations identical to the ones used in \cite{Gunter-2005}.
At each point $(x_i,y_j)$, we approximate $(\nabla_{(x,y)}\cdot ( \QM \nabla_{(x,y)} \uMe))_{i,j}$ and $(\nabla_{(x,y)}\cdot ( \Qm \nabla_{(x,y)} \ume))_{i,j}$ by
\begin{equation}
\begin{alignedat}{2}
\DD_{\QM}(\uMec) 
=\frac{\aMxx}{\dx^2}  &\sum_{\ell=0,\pm1}c_\ell\left(  \g_{i+\half,j+\frac{\ell}{2}}(\uMe_{i+1,j+\ell} - \uMec) - \g_{i-\half,j+\frac{\ell}{2}}(\uMec - \uMe_{i-1,j+\ell}) \right) \\
+\frac{\aMyy}{\dy^2}  &\sum_{k=0,\pm1} c_k \left(\g_{i+\frac{k}{2},j+\half}(\uMe_{i+k,j+1} - \uMec) - \g_{i+\frac{k}{2},j-\half}(\uMec - \uMe_{i+k,j-1}) \right)
\end{alignedat}
\label{eq:D2_macro_def}
\end{equation}
and 
\begin{equation}
\begin{alignedat}{2}
\DD_{\Qm}(\umec) 
=&\frac{\axx^T}{\dx^2}   \sum_{\ell=0,\pm1} c_\ell \left( \g_{i+\half,j+\frac{\ell}{2}}(\ume_{i+1,j+\ell} - \umec) - \g_{i-\half,j+\frac{\ell}{2}} (\umec - \ume_{i-1,j+\ell}) \right) \\
+&\frac{\ayy^T}{\dy^2}   \sum_{k=0,\pm1} c_k \left(\g_{i+\frac{k}{2},j+\half} (\ume_{i+k,j+1} - \umec) - \g_{i+\frac{k}{2},j-\half}(\umec - \ume_{i+k,j-1}) \right) \\
+&\frac{\axy^T}{2\dx\dy} \sum_{k=\pm1}\left( \g_{i+\frac{k}{2},j+\frac{k}{2}}(\ume_{i+k,j+k} - \umec) - \g_{i+\frac{k}{2},j-\frac{k}{2}}(\ume_{i+k,j-k} - \umec)\right)\:,
\end{alignedat}
\label{eq:D2_micro_def}
\end{equation}
respectively, with averaging coefficients $c_0=\half$ and $c_{\pm1}=\frac{1}{4}$.
Mixed derivatives do not appear in \eqref{eq:D2_macro_def} since $\QM_{xy}=\QM_{yx}=0$ (see \eqref{eq:QM_def}).
Here we compute $\g_{i\pm\half,j}$, $\g_{i,j\pm\half}$ and $\g_{i\pm\half,j\pm\half}$ by taking the harmonic averages
of the adjacent values as proposed in \cite{Sharma-Hammett-2007}.
Specifically,
\begin{equation}
\g_{i+\half,j} := 2\left( \frac{1}{\g_{i+1,j}}+\frac{1}{\g_{i,j}}\right)^{-1}\:,\quad
\g_{i,j+\half} := 2\left( \frac{1}{\g_{i,j+1}}+\frac{1}{\g_{i,j}}\right)^{-1}\:,
\end{equation}
and
\begin{equation}
\g_{i+\half,j+\half} := 4\left( \sum_{k=0,1}\sum_{\ell=0,1}\frac{1}{\g_{i+k,j+\ell}}\right)^{-1}\:.
\end{equation}

\subsubsection{Micro equation}
\label{subsubsec:spatial_discretization_micro}

For the micro equation \eqref{eq:micro_time_discre_2D}, we adopt the spatial discretization used in \cite{Lemou-Mieussens-2008} in the one-dimensional setting, but with second-order discretizations for more accurate solutions. 
Specifically, we discretize the micro advection term by a second-order upwind kinetic scheme (see, for example, \cite{AHT10, GH12, Deshpande-1986,Perthame-1992}):
\begin{equation}
\left((\Ax \p_x + \Ay \p_y) \GammaM\vme\right)_{i,j}
\approx 
(\Ax^+ \Dmx + \Ax^- \Dpx + \Ay^+ \Dmy + \Ay^- \Dpy)(\Gc \vmec) \:,
\end{equation}
where $\Ax^\pm:=\vint{\mm\mm^T\Ox^\pm}$, $\Ay^\pm:=\vint{\mm\mm^T\Oy^\pm}$, with $\Ox^\pm:=\max\{\pm\Ox,0\}$ and $\Oy^\pm:=\max\{\pm\Oy,0\}$.
In the $x$-direction, $\Dpx$ and $\Dmx$ are defined as 
\begin{equation}
\begin{alignedat}{2}
\Dpx(\wc)&:=\frac{1}{\dx}\left(\left(\we - \frac{\wee-\wc}{4} \right) - \left(\wc + \frac{\we-\ww}{4}  \right)\right) \:,\\
\Dmx(\wc)&:=\frac{1}{\dx}\left(\left(\wc - \frac{\we-\ww}{4}  \right) - \left(\ww + \frac{\wc-\www}{4} \right)\right) \:.
\end{alignedat}
\end{equation}
In the $y$-direction, $\Dpy$ and $\Dmy$ are defined similarly.
The macro advection term is discretized  using the central difference operators given in \eqref{eq:Dc_def}, i.e.,
\begin{equation}
\left((\ax \p_{x} + \ay \p_{y}) \uMe\right)_{i,j}
\approx 
(\ax \Dcx + \ay \Dcy)\uMec \:.
\end{equation}

\subsection{Fully discretized micro-macro scheme and the AP property}
\label{subsec:spatial_discretization_AP}

We now show in Theorem~\ref{thm:AP} that under some reasonable assumptions, the fully discretized scheme for the micro-macro system \eqref{eq:moment_reduced} recovers a standard explicit discretization of the diffusion equation
\begin{equation}
\p_t\uM - \p_x \left(\frac{1}{3\sig{s}}\p_x \uM \right)- \p_y \left(\frac{1}{3\sig{s}}\p_y \uM\right) + \sig{a} \uM = 0
\label{eq:diffusion_2D}
\end{equation}
when $\epsilon\to0$.
For reference, the scheme is
\begin{subequations}
\label{eq:full_discre_2D}
\begin{align}
(1+(\sig{a})_{i,j}\dt)\uMic &= \uMec + \frac{\dt^2}{\epsilon}\DD_{\Qm}(\umec)
+ \frac{\dt^2}{\epsilon^2} \DD_{\QM}(\uMec)  
\label{eq:macro_full_discre_2D}\\
 - {\dt}&\Bigg((\ax^T \Dcx + \ay^T \Dcy)(\Gc \umec) - \Theta \frac{\g_{\max}}{\epsilon} (\dx^3\ddx +\dy^3 \ddy)\uMec\Bigg)
\:, \nonumber\\
\vmec\,\,\, &=\epsilon^2\Gc^{-1}\umec \:, \label{eq:change_var}\\
\vmic
&= \Gc \vmec  - \frac{\dt}{\epsilon} (\Ax^+ \Dmx + \Ax^- \Dpx + \Ay^+ \Dmy + \Ay^- \Dpy)(\Gc \vmec) \nonumber\\
&\quad- \dt (\ax \Dcx + \ay \Dcy)\uMec\:,
\label{eq:micro_full_discre_2D}\\
\umic &=\epsilon^{-2}\Gc \vmic \:. \label{eq:change_var_back}
\end{align}
\end{subequations}

In Theorem~\ref{thm:AP}, we show that the proposed scheme is \textit{weakly} asymptotic preserving (see \cite{Jin-2012} and discussions therein) under a mild time-step restriction.
Specifically, we prove the AP property assuming that (i) the initial condition is sufficiently close to the equilibrium and (ii) the time step satisfies a lower bound that guarantees the artificial dissipation term vanishes as $\epsilon\to0$ (see \eqref{eq:artificial_dissipation_bound}).

\begin{theorem}
\label{thm:AP}
Assume that (i) the initial conditions $\uMc^{0}$ and $\umc^{0}$ are both $O(1)$ quantities and (ii) $\dt\geq C (\dx+\dy)^3$. 
When $\epsilon\to0$, the macro update scheme \eqref{eq:macro_full_discre_2D} becomes a consistent 9-point discretization of the diffusion equation \eqref{eq:diffusion_2D}:
\begin{equation}
\label{eq:macro_limit_2D}
\begin{alignedat}{2}
(1+(\sig{a})_{i,j}\dt)\uMic = \uMec &+ \frac{\dt}{\dx^2} \sum_{\ell=0,\pm1} c_\ell\left( \frac{\uMe_{i+1,j+\ell}-\uMec}{3{(\sig{s})}_{i+\half,j+\frac{\ell}{2}}}
-\frac{\uMec-\uMe_{i-1,j+\ell}}{3{(\sig{s})}_{i-\half,j+\frac{\ell}{2}}}\right)\\
&+ \frac{\dt}{\dy^2} \sum_{k=0,\pm1} c_k\left(\frac{\uMe_{i+k,j+1}-\uMec}{3{(\sig{s})}_{i+\frac{k}{2},j+\half}}-\frac{\uMec-\uMe_{i+k,j-1}}{3{(\sig{s})}_{i+\frac{k}{2},j-\half}}\right)\:,
\end{alignedat}
\end{equation}
where $c_0=\frac{1}{2}$ and $c_{\pm1}=\frac{1}{4}$.
\end{theorem}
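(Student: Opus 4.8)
The statement bundles two assertions: that the $\epsilon\to0$ limit of the macro update \eqref{eq:macro_full_discre_2D} is \eqref{eq:macro_limit_2D}, and that \eqref{eq:macro_limit_2D} is a consistent discretization of \eqref{eq:diffusion_2D}. For the first, the plan is an induction on the time level $n$, carrying the hypothesis that $\uMec$ and $\umec$ are bounded uniformly in $\epsilon$ (with $\dx$, $\dy$, $\dt$ held fixed, subject to (ii)); the base case $n=0$ is assumption (i). The only facts needed about the scattering matrix are read from \eqref{eq:Gamma_def}: every diagonal entry of $\Gc$ is $O(\epsilon^2)$, with $\epsilon^{-2}\Gc_{(\ell,k),(\ell,k)}\to(\dt\,\sig{s})^{-1}$ and $\epsilon^{2}\Gc^{-1}_{(\ell,k),(\ell,k)}=\epsilon^{2}\bigl(1+\sig{a}\dt+\sigF\dt\,\Fm_{(\ell,k),(\ell,k)}\bigr)+\sig{s}\dt\to\sig{s}\dt$ as $\epsilon\to0$ (the absorption and filter contributions carry a factor $\epsilon^2$ and drop out). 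In particular $\g=O(\epsilon^2)$ with $\g/\epsilon^2\to(\sig{s}\dt)^{-1}$, and—since harmonic averaging is positively homogeneous of degree one—$\epsilon^{-2}\g_{i\pm\half,j+\ell/2}\to\bigl(\dt\,(\sig{s})_{i\pm\half,j+\ell/2}\bigr)^{-1}$, where $(\sig{s})_{i\pm\half,j+\ell/2}$ is the corresponding arithmetic average of nodal $\sig{s}$ values; likewise $\g_{\max}/\epsilon\to0$.

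For the micro update I would substitute the change of variables \eqref{eq:change_var} into \eqref{eq:micro_full_discre_2D}. Since $\vmec=\epsilon^{2}\Gc^{-1}\umec$ is $O(1)$, the term $\Gc\vmec$ is $O(\epsilon^{2})$ and $\tfrac{\dt}{\epsilon}(\Ax^{+}\Dmx+\Ax^{-}\Dpx+\Ay^{+}\Dmy+\Ay^{-}\Dpy)(\Gc\vmec)$ is $O(\epsilon)$; both vanish, leaving $\vmic=-\dt(\ax\Dcx+\ay\Dcy)\uMec+O(\epsilon)$. Then \eqref{eq:change_var_back} gives $\umic=\epsilon^{-2}\Gc\vmic\to-\tfrac{1}{\sig{s}}(\ax\Dcx+\ay\Dcy)\uMec$, which is again $O(1)$; this is the micro half of the inductive step.

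For the macro update \eqref{eq:macro_full_discre_2D} I would dispose of the forcing terms one at a time. Because $\ax$ and $\ay$ each have a single nonzero entry, located at a degree-one mode where the corresponding diagonal entry of $\Gc$ equals $\g=O(\epsilon^{2})$, the central-difference term $\dt(\ax^{T}\Dcx+\ay^{T}\Dcy)(\Gc\umec)$ is $O(\epsilon^{2})$ and vanishes. Every summand of $\DD_{\Qm}$ in \eqref{eq:D2_micro_def} carries a harmonic-averaged factor of $\g$ of size $O(\epsilon^{2})$ acting on $\umec=O(1)$, so $\tfrac{\dt^{2}}{\epsilon}\DD_{\Qm}(\umec)=O(\epsilon)$ and vanishes. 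The artificial dissipation $\dt\,\Theta\tfrac{\g_{\max}}{\epsilon}(\dx^{3}\ddx+\dy^{3}\ddy)\uMec$ vanishes by the estimate \eqref{eq:artificial_dissipation_bound}: assumption (ii) makes the prefactor $\Theta\tfrac{\g_{\max}}{\epsilon}\dx^{3}$ (and its $\dy$ analogue) an $O(\epsilon^{1/3})$ quantity, while—because $\theta>1$—the minmod limiter returns its central argument away from the extrema of $\uM$, so $\ddx\uMec$ and $\ddy\uMec$ are bounded (here a mild regularity assumption on $\uM$, together with $\uMec=O(1)$ from the induction, is used). The surviving term is $\tfrac{\dt^{2}}{\epsilon^{2}}\DD_{\QM}(\uMec)$: inserting $\epsilon^{-2}\g_{i\pm\half,j+\ell/2}\to(\dt(\sig{s})_{i\pm\half,j+\ell/2})^{-1}$ into \eqref{eq:D2_macro_def} and using $\aMxx=\aMyy=\tfrac13$ from \eqref{eq:QM_def}, it converges exactly to the nine-point stencil on the right-hand side of \eqref{eq:macro_limit_2D}. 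Collecting the four limits turns \eqref{eq:macro_full_discre_2D} into \eqref{eq:macro_limit_2D}; this also shows $\uMic=O(1)$, which closes the induction.

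It remains to check that \eqref{eq:macro_limit_2D} is consistent with \eqref{eq:diffusion_2D}. Rearranged as $\tfrac{\uMic-\uMec}{\dt}+(\sig{a})_{i,j}\uMic=\tfrac{1}{\dt}(\text{nine-point term})$, the left side approximates $\p_t\uM+\sig{a}\uM$ consistently (to first order in $\dt$), and the right side is by construction the $\epsilon\to0$ limit of $\tfrac{\dt}{\epsilon^{2}}\DD_{\QM}(\uMec)$; since $\DD_{\QM}$ is the formally second-order, conservative centered symmetric discretization of $\nabla_{(x,y)}\cdot(\QM\nabla_{(x,y)}u)$ from Section~\ref{subsubsec:spatial_discretization_dif} (cf.\ \cite{Gunter-2005}), $\QM=\diag(\tfrac{\g}{3},\tfrac{\g}{3})$ by \eqref{eq:QM_def}, and $\g/\epsilon^{2}\to(\dt\sig{s})^{-1}$, the limiting stencil is a consistent discretization of $\nabla_{(x,y)}\cdot(\tfrac{1}{3\sig{s}}\nabla_{(x,y)}\uM)$; one can also verify this directly by Taylor expansion of the nine-point stencil. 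Hence \eqref{eq:macro_limit_2D} is a consistent discretization of \eqref{eq:diffusion_2D}, as claimed. The delicate point of the whole argument is the vanishing of the artificial dissipation: it is the one place where both hypotheses of the theorem are genuinely needed and where a regularity assumption on $\uM$ must be invoked to control $\ddx\uMec$ and $\ddy\uMec$; the rest is careful but routine $\epsilon$-asymptotic bookkeeping.
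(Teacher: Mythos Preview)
Your proposal is correct and follows essentially the same inductive argument as the paper's proof: both carry the hypothesis that $\uMec$ and $\umec$ are $O(1)$, use the $\Gc=O(\epsilon^2)$ scalings from \eqref{eq:Gamma_def} to kill the advection, micro-diffusion, and artificial-dissipation terms, and identify the surviving $\tfrac{\dt^2}{\epsilon^2}\DD_{\QM}$ term as the nine-point stencil. You supply somewhat more detail than the paper does---explicitly computing the limit of the harmonic averages (so that $(\sig{s})_{i+\half,j+\ell/2}$ emerges as the arithmetic mean of the neighboring $\sig{s}$ values), writing out the limiting micro update, and sketching the consistency of \eqref{eq:macro_limit_2D} with \eqref{eq:diffusion_2D}---but the structure and key ideas are identical.
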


\begin{proof}
We first prove by induction that when $\epsilon\to0$, $\uMec$ and $\umec$ are $O(1)$ quantities for all $n\in\bbN$ under assumptions (i) and (ii).
The initial case is given by assumption (i).
Suppose then that $\uMec$ and $\umec$ are $O(1)$ quantities.
From \eqref{eq:Gamma_def}, each element of $\Gc$ is in $(0,1)$, 
\begin{equation}
\label{eq:gamma_bounds}
\epsilon^2\Gc^{-1}=(\sig{s})_{i,j}\dt + O(\epsilon^2)\:,
\quand
\frac{\dt}{\epsilon^p}\Gc\leq\frac{\epsilon^{2-p}}{(\sig{s})_{i,j}}= O(\epsilon^{2-p})\:,\quad p=0,\,1,\,2\:.
\end{equation}
Together \eqref{eq:change_var} and \eqref{eq:gamma_bounds} imply that $\vmec$ is $O(1)$.
It then follows from \eqref{eq:gamma_bounds} that the micro advection term in \eqref{eq:micro_full_discre_2D} is an $O(\epsilon)$ term and thus vanishes as $\epsilon\to0$.
Combining \eqref{eq:change_var}--\eqref{eq:change_var_back} and taking $\epsilon\to0$ then leads to
\begin{equation}
\umic = \lim_{\epsilon\to0} \left(\Gc \umec - \frac{\dt}{\epsilon^2}\Gc (\ax \Dcx + \ay \Dcy)\uMec\right)\:,
\end{equation}
which implies that $\umic$ is an $O(1)$ term since, by \eqref{eq:gamma_bounds}, both terms on the right-hand side are $O(1)$.

On the other hand, it follows from \eqref{eq:gamma_bounds} that the advection term in \eqref{eq:macro_full_discre_2D} is $O(\epsilon^2)$ and that the diffusion term associated to $\Qm$ is $O(\epsilon)$ (see \eqref{eq:Qm_def}).
Thus, these two terms vanish as $\epsilon\to0$. 
In addition, the artificial dissipation in \eqref{eq:macro_full_discre_2D} also vanishes as $\epsilon\to0$ under assumption (ii) (see \eqref{eq:artificial_dissipation_bound}).
Substituting \eqref{eq:D2_macro_def} into \eqref{eq:macro_full_discre_2D} and taking $\epsilon\to0$ then leads to
the nine-point discretization \eqref{eq:macro_limit_2D} that guarantees $\uMic$ is $O(1)$.
Hence, it is proved by induction that for all $n\in\bbN$, $\uMec$ and $\umec$ are both $O(1)$, and thus \eqref{eq:macro_full_discre_2D} becomes the nine-point discretization \eqref{eq:macro_limit_2D} when $\epsilon\to0$.\qed
\end{proof}

\begin{remark}
As discusses in \cite{Larsen-Morel-Miller-1987}, the diffusion limits can be classified into three types based on the scaling of the spatial mesh. 
Specifically, the three types of diffusion limits are the \textup{thick} $((\dx ,\, \dy) = O(1))$, \textup{intermediate} $((\dx,\,\dy) = O(\epsilon))$, and \textup{thin} $((\dx,\,\dy) = O(\epsilon^\ell)$, $\ell\geq 2)$ limits.
We note that the AP and positivity-preserving properties proved in Theorems~\ref{thm:AP} and \ref{thm:positivity} hold for all three types of diffusion limits.
Further, it follows from \eqref{eq:artificial_dissipation_bound} that the artificial dissipation vanishes in the intermediate and thin diffusion limits regardless of $\dt$.
Thus the time-step restriction in assumption (ii) is necessary only when the thick diffusion limit is considered.
\end{remark}

\section{The positive-preserving property}
\label{sec:Positivity}

In Section~\ref{subsec:pos_proof}, we state and prove Theorem~\ref{thm:positivity}, which gives sufficient conditions to yield the positive-preserving property of the fully discretized scheme \eqref{eq:full_discre_2D}.
The approach we use to enforce these conditions in the proposed scheme is presented in Sections~\ref{subsec:limiters} and \ref{subsec:pos_cfl}.
In Section~\ref{subsec:3D}, we discuss the difficulties in extending the proposed scheme and the associated positivity conditions to the three-dimensional case, and we propose a possible approach.

\subsection{Sufficient conditions for preserving positivity}
\label{subsec:pos_proof}

Here we state Theorem~\ref{thm:positivity} on the positive-preserving property of the fully discretized scheme \eqref{eq:full_discre_2D}.
For convenience, in addition to $\uM\in\bbR$ and $\um\in\bbR^{\nm}$, we also use the notation $\uF:=[\uM, \epsilon\um^T]^T\in\bbR^n$ in Theorem~\ref{thm:positivity}. 
The corresponding vectors are defined as $\aFxx:=[\aMxx, \axx^T]^T=[\frac{1}{3}, \axx^T]^T\in\bbR^n$, $\aFxy:=[\aMxy, \axy^T]^T=[0, \axy^T]^T\in\bbR^n$, and $\aFyy:=[\aMyy, \ayy^T]^T=[\frac{1}{3}, \ayy^T]^T\in\bbR^n$, respectively.
Also, we recall the definition of $\g_{\max}$ in \eqref{eq:gamma_max_def}.
\begin{theorem}
At time $t^n$, suppose that $\uMec\geq0$ and that $\uFec:=[\uMec, \epsilon(\umec)^T]^T$ satisfies the conditions\\
\begin{minipage}{.05\linewidth}
	~
\end{minipage}~
\begin{minipage}{.30\linewidth}
\begin{equation}
\uMec \pm \epsilon\ax^T\umec \geq 0 \:,
\tag{C1}
\label{cond:Tx}
\end{equation}
\end{minipage}~
\begin{minipage}{.05\linewidth}
	~
\end{minipage}~
\begin{minipage}{.30\linewidth}
\begin{equation}
\uMec \pm \epsilon\ay^T\umec \geq 0 \:,
\tag{C2}
\label{cond:Ty}
\end{equation}
\end{minipage}\\
\begin{minipage}{0.05\linewidth}
	~
\end{minipage}~
\begin{minipage}{.30\linewidth}
\begin{equation}
\uMec \geq \aFxx^T\uFec \geq 0 \:,
\tag{C3}
\label{cond:Dxx}
\end{equation}
\end{minipage}~
\begin{minipage}{0.05\linewidth}
	~
\end{minipage}~
\begin{minipage}{.30\linewidth}
\begin{equation}
\uMec \geq \aFyy^T\uFec \geq 0 \:,
\tag{C4}
\label{cond:Dyy}
\end{equation}
\end{minipage}\\
\begin{minipage}{0.05\linewidth}
	~
\end{minipage}~
\begin{minipage}{.30\linewidth}
\begin{equation}
 \uMec \pm 2\aFxy^T\uFec \geq 0 \:,
 \tag{C5}
 \label{cond:Dxy1}
\end{equation}
\end{minipage}~
\begin{minipage}{0.05\linewidth}
	~
\end{minipage}~
\begin{minipage}{.51\linewidth}
\begin{equation}
   \left(\frac{\aFxx^T}{\dx^2}   \pm {2}\frac{\aFxy^T}{\dx\dy}   + \frac{\aFyy^T}{\dy^2}  \right) \uFec  \geq0 \:,
 \tag{C6}
 \label{cond:Dxy2}
\end{equation}
\end{minipage}\\
~\\
for each $(x_i,y_j)$ on the spatial mesh.
Further, assume that $\dt$ satisfies
\begin{equation}
1-\g_{\max} \left(\frac{2\Theta\dt}{\epsilon\dx} + \frac{2\Theta\dt}{\epsilon\dy} 
+ \frac{2\dt^2}{\epsilon^2\dx^2} + \frac{\dt^2}{2\epsilon^2\dx\dy} + \frac{2\dt^2}{\epsilon^2\dy^2}\right)\geq0
\tag{C7}
\label{cond:CFL}
\end{equation}
with $\Theta=\frac{1}{2-\theta}$ depends on the minmod parameter $\theta\in(1,2)$ in \eqref{eq:slope_minmod}.
Then the macro scheme \eqref{eq:macro_full_discre_2D} guarantees that $\uMic\geq0$ for each $(x_i,y_j)$ on the spatial mesh.
\label{thm:positivity}
\end{theorem}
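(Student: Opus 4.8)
The plan is to prove that the whole right-hand side of \eqref{eq:macro_full_discre_2D} is nonnegative; since $(\sig{a})_{i,j}\ge 0$ gives $1+(\sig{a})_{i,j}\dt\ge 1>0$, dividing by this factor then yields $\uMic\ge 0$. I would expand every operator appearing in \eqref{eq:macro_full_discre_2D} into an explicit linear combination of grid values of $\uMec$ and $\umec$ over the stencil, regroup the sum node by node, show that the contribution attached to every node other than $(x_i,y_j)$ --- or, for the advective part, to a suitable pair of nodes --- is manifestly nonnegative by using $\uMec\ge 0$ everywhere together with \eqref{cond:Tx}--\eqref{cond:Dxy2}, and finally bound the coefficient multiplying the central value $\uMec$ from below by the left-hand side of \eqref{cond:CFL}, which is nonnegative by \eqref{cond:CFL}. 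Two elementary facts are used throughout: first, $0<\g\le\g_{\max}$ at every grid point (immediate from \eqref{eq:Gamma_def} and the definition \eqref{eq:gamma_max_def}, since $\sig{s}\ge\sig{s}^{\min}$), hence also at every half-grid point, because harmonic averages of numbers in $(0,\g_{\max}]$ again lie in $(0,\g_{\max}]$; second, $\sum_{\ell=0,\pm1}c_\ell=1$.

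For the diffusion terms, set $P_{i,j}:=\aFxx^T\uFec$, $Q_{i,j}:=\aFyy^T\uFec$ and $R_{i,j}:=\aFxy^T\uFec$ as grid functions. Because $\uF=[\uM,\epsilon\um^T]^T$, the prefactors $\dt^2/\epsilon^2$ in front of $\DD_{\QM}$ and $\dt^2/\epsilon$ in front of $\DD_{\Qm}$ are exactly compatible: adding the $\partial_x\partial_x$-pieces of \eqref{eq:D2_macro_def} and \eqref{eq:D2_micro_def} gives $\dt^2/\epsilon^2$ times a single $\g$-weighted second-difference stencil acting on $P$; the $\partial_y\partial_y$-pieces give the corresponding stencil acting on $Q$; and the mixed piece of \eqref{eq:D2_micro_def}, which has no counterpart in $\DD_{\QM}$ because $\QM$ is diagonal (see \eqref{eq:QM_def}), gives, again with weight $\dt^2/\epsilon^2$, the cross stencil acting on $R$. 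Collecting at each diagonal node $(x_{i\pm1},y_{j\pm1})$ its two $c_{\pm1}$ contributions from the second-difference stencils together with its single contribution from the cross stencil, the combined weight there equals $\dt^2/(4\epsilon^2)$ times a positive half-grid value of $\g$ times $\bigl(\tfrac{1}{\dx^2}P\pm\tfrac{2}{\dx\dy}R+\tfrac{1}{\dy^2}Q\bigr)$ evaluated at that node --- with sign $+$ at $(x_{i+1},y_{j+1})$ and $(x_{i-1},y_{j-1})$ and sign $-$ at $(x_{i+1},y_{j-1})$ and $(x_{i-1},y_{j+1})$ --- which is nonnegative by the matching sign in \eqref{cond:Dxy2}. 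The axis-aligned nodes $(x_{i\pm1},y_j)$ and $(x_i,y_{j\pm1})$ carry only the $\ell=0$, respectively $k=0$, terms, i.e.\ nonnegative multiples of $P\ge0$, respectively $Q\ge0$ (the right-hand inequalities in \eqref{cond:Dxx}--\eqref{cond:Dyy}). The only contribution of the diffusion terms to the coefficient of $\uMec$ comes from the central entries of the three stencils; using $\sum_\ell c_\ell=1$, $\g\le\g_{\max}$, $0\le P_{i,j},Q_{i,j}\le\uMec$ (\eqref{cond:Dxx}--\eqref{cond:Dyy}) and $|R_{i,j}|\le\tfrac12\uMec$ (\eqref{cond:Dxy1}), this contribution is at least $-\g_{\max}\bigl(\tfrac{2\dt^2}{\epsilon^2\dx^2}+\tfrac{\dt^2}{2\epsilon^2\dx\dy}+\tfrac{2\dt^2}{\epsilon^2\dy^2}\bigr)\uMec$.

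For the advection and the artificial dissipation, I would rewrite $\dx^3\ddx\uMec$ via the edge reconstructions in \eqref{eq:ddx_def}--\eqref{eq:ddx_edge_values} (reading the west-interface values as the shift-by-one of the east-interface ones): the centre-cell slope $s_{i,j}^x$ cancels, so that $\dx^3\ddx\uMec=\tfrac1\dx\bigl(\hat u^{+}+\hat u^{-}-2\uMec\bigr)$ with $\hat u^{+}:=\uMe_{i+1,j}-\tfrac\dx2 s_{i+1,j}^x$ and $\hat u^{-}:=\uMe_{i-1,j}+\tfrac\dx2 s_{i-1,j}^x$. Hence the $x$-artificial dissipation puts $-\tfrac{2\dt\Theta\g_{\max}}{\epsilon\dx}\uMec$ at $(x_i,y_j)$ and $\tfrac{\dt\Theta\g_{\max}}{\epsilon\dx}\hat u^{\pm}$ at $(x_{i\pm1},y_j)$, while the central-difference advection $-\dt(\ax^T\Dcx+\ay^T\Dcy)(\Gc\umec)$ contributes nothing at $(x_i,y_j)$ and, since $\ax^T\GammaM=\g\,\ax^T$ at every grid point (only one entry of $\ax$ is nonzero, cf.\ \eqref{eq:Qm_def}), puts $\mp\tfrac{\dt\g_{i\pm1,j}}{2\dx}\ax^T\ume_{i\pm1,j}$ at $(x_{i\pm1},y_j)$ and $\mp\tfrac{\dt\g_{i,j\pm1}}{2\dy}\ay^T\ume_{i,j\pm1}$ at $(x_i,y_{j\pm1})$. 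Pairing at $(x_{i+1},y_j)$ gives $\tfrac\dt\dx\bigl(\tfrac{\Theta\g_{\max}}{\epsilon}\hat u^{+}-\tfrac12\g_{i+1,j}\,\ax^T\ume_{i+1,j}\bigr)$; here I would invoke the bound $\hat u^{+}\ge(2-\theta)\uMe_{i+1,j}/2=\uMe_{i+1,j}/(2\Theta)$, which follows from the minmod definition \eqref{eq:slope_minmod} (with indices shifted by one), $\uMec\ge0$, and $\theta<2$ by splitting on the sign of $s_{i+1,j}^x$, together with $\g_{i+1,j}\le\g_{\max}$ and $|\ax^T\ume_{i+1,j}|\le\uMe_{i+1,j}/\epsilon$ from \eqref{cond:Tx}, to conclude that this pair is $\ge0$; this is exactly where the calibration $\Theta=1/(2-\theta)$ of the artificial-dissipation strength (see \eqref{eq:advection_discretization}, \eqref{eq:gamma_max_def}) enters. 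The pair at $(x_{i-1},y_j)$ is identical after a sign flip, and the $y$-direction is treated the same way using \eqref{cond:Ty}, adding a further $-\tfrac{2\dt\Theta\g_{\max}}{\epsilon\dy}\uMec$ at $(x_i,y_j)$.

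Putting the pieces together, the coefficient of $\uMec$ on the right-hand side of \eqref{eq:macro_full_discre_2D} is at least $1-\g_{\max}\bigl(\tfrac{2\Theta\dt}{\epsilon\dx}+\tfrac{2\Theta\dt}{\epsilon\dy}+\tfrac{2\dt^2}{\epsilon^2\dx^2}+\tfrac{\dt^2}{2\epsilon^2\dx\dy}+\tfrac{2\dt^2}{\epsilon^2\dy^2}\bigr)$, which is nonnegative by \eqref{cond:CFL}, while every other node (or pair of nodes) contributes nonnegatively by the arguments above; hence the right-hand side of \eqref{eq:macro_full_discre_2D} is nonnegative and $\uMic\ge0$. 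I expect the bookkeeping for the cross-derivative term to be the main obstacle: one has to split the off-axis ($c_{\pm1}$) parts of the two second-difference stencils and recombine them with the cross stencil so that the weight at each of the four diagonal nodes matches exactly the sign pattern of \eqref{cond:Dxy2}, and one has to control the sign-indefinite central part of the cross stencil using \eqref{cond:Dxy1} and $0<\g\le\g_{\max}$; this is also what pins down the precise $\dt^2/(2\epsilon^2\dx\dy)$ term in \eqref{cond:CFL}. The other delicate point is the reconstruction estimate $\hat u^{+}\ge\uMe_{i+1,j}/(2\Theta)$ (and its analogue for $\hat u^{-}$), which uses that $\minmod$ returns the argument of smallest modulus and requires $\theta<2$, i.e.\ $\Theta<\infty$ --- this is why the positive-preserving proof needs $\theta<2$ even though AP-consistency only needs $\theta>1$.
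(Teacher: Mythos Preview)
Your proposal is correct and follows essentially the same route as the paper: split the right-hand side of \eqref{eq:macro_full_discre_2D} into advection-plus-dissipation and diffusion pieces, regroup node by node, show the off-centre contributions are nonnegative via \eqref{cond:Tx}--\eqref{cond:Dxy2}, and bound the centre coefficient from below by the left-hand side of \eqref{cond:CFL}. The only cosmetic difference is that the paper packages your inline estimate $\hat u^{\pm}\ge \uMe_{i\pm1,j}/(2\Theta)$ as a separate lemma on $\ddx$ (namely $\dx^4\ddx(\uMec)\ge \tfrac12\bigl(\tfrac{1}{\Theta}\uMe_{i+1,j}-4\uMec+\tfrac{1}{\Theta}\uMe_{i-1,j}\bigr)$, proved by the same minmod case split you describe), whereas you derive and use this bound directly inside the main argument.
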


\begin{proof}
For simplicity, we write \eqref{eq:macro_full_discre_2D} as
\begin{equation}
(1+(\sig{a})_{i,j}\dt)\uMic=\uMec + \Tx + \Ty + \Diff\:,
\label{eq:macro_full_discre_2D_short}
\end{equation}
where $\Tx$ and $\Ty$ denote the discretized advection terms in the $x$ and $y$ directions, respectively, and $\Diff$ denotes the sum of the two diffusion terms.
Specifically, 
\begin{subequations}
\begin{align}
\label{eq:trans_term_x}
\Tx:=&-  \dt\left( \ax^T \Dcx (\Gc \umec) - {\Theta}\frac{\g_{\max}}{\epsilon} \dx^3\ddx (\uMec)\right)\:,\\
\label{eq:trans_term_y}
\Ty:=&-  \dt\left( \ay^T \Dcy (\Gc \umec) - {\Theta}\frac{\g_{\max}}{\epsilon} \dy^3\ddy (\uMec)\right)\:,\\
\label{eq:diff_term}
\Diff:=&\frac{\dt^2}{\epsilon}\DD_{\Qm}(\umec) + \frac{\dt^2}{\epsilon^2} \DD_{\QM}(\uMec)\:.
\end{align}
\end{subequations}
Since $\sig{a}$ is assumed to be nonnegative, we know from \eqref{eq:macro_full_discre_2D_short} that $\uMic\geq0$ if 
\begin{equation}
\uMec + \Tx + \Ty + \Diff \geq0\:,
\label{eq:goal}
\end{equation}
which we now show.

We first consider the term $\Tx$.
As shown in Appendix~\ref{appendix:minmod_bound}, the operator $\ddx$ defined in \eqref{eq:ddx_def}, satisfies
\begin{equation}
\ddx(\uMec) \geq \frac{1}{2\dx^4} \left(\frac{1}{\Theta} \uMee - 4 \uMec +\frac{1}{\Theta} \uMew\right)\:.
\label{eq:ddx_bound}
\end{equation}
Applying \eqref{eq:ddx_bound} and the definition of $\Dcx$ in \eqref{eq:Dc_def} to \eqref{eq:trans_term_x} leads to
\begin{equation}
\begin{alignedat}{2}
\Tx &\geq
  \frac{\dt}{2\epsilon\dx} \left(\g_{\max} \left( \uMee - 4\Theta \uMec +  \uMew\right) - \epsilon\ax^T (\Ge \umee - \Gw \umew)\right)\\
&\geq  \frac{\dt}{2\epsilon\dx} \Big(\g_{\max} \left( \uMee - 4\Theta \uMec +  \uMew\right) - \epsilon \g_{\max} \left( |\ax^T\umee| + |\ax^T\umew|\right) \Big)\\
&=  \frac{\g_{\max} \dt}{2\epsilon\dx} \sum_{k=\pm1} \left(  \uMe_{i+k,j}-\epsilon|\ax^T  \ume_{i+k,j}|  \right)
 - \g_{\max}\frac{2\Theta\dt}{\epsilon\dx} \uMec\:.
\end{alignedat}
\label{eq:trans_term_x_bound}
\end{equation}
Here the second inequality follows from two facts: (i) all diagonal entries of $\G$ are bounded from above by $\g_{\max}$ and (ii) all but one entries of $\ax$ are zero, as discussed in Appendix~\ref{appendix:diffusion_calculation}. 
A similar lower bound for $\Ty$ can be obtained analogously.
It follows from \eqref{cond:Tx} that the first term in the lower bound of $\Tx$ is nonnegative.
Similarly, the corresponding term in the lower bound of $\Ty$ is also nonnegative from \eqref{cond:Ty}.
Thus, by plugging the lower bounds of $\Tx$ and $\Ty$ into \eqref{eq:goal}, it suffices to show 
\begin{equation}
\begin{alignedat}{2}
\left(1 - \g_{\max} \left(\frac{2\Theta\dt}{\epsilon\dx} + \frac{2\Theta\dt}{\epsilon\dy}\right)\right)\uMec 
+ \Diff
\geq0\:.
\end{alignedat}
\label{eq:macro_discre_2D_bound}
\end{equation}

We next consider the term $\Diff$. 
Substituting \eqref{eq:D2_macro_def} and \eqref{eq:D2_micro_def} into \eqref{eq:diff_term} gives
\begin{equation}
\label{eq:macro_discre_2D_bound_1}
\begin{alignedat}{2}
\Diff 
& = \frac{\dt^2}{\epsilon^2\dx^2} \aFxx^T  \sum_{\ell=0,\pm1} c_\ell \left( \g_{i+\half,j+\frac{\ell}{2}}(\uFe_{i+1,j+\ell} - \uFec) - \g_{i-\half,j+\frac{\ell}{2}} (\uFec - \uFe_{i-1,j+\ell}) \right) \\
&+\frac{\dt^2}{\epsilon^2\dy^2} \aFyy^T  \sum_{k=0,\pm1} c_k \left(\g_{i+\frac{k}{2},j+\half} (\uFe_{i+k,j+1} - \uFec) - \g_{i+\frac{k}{2},j-\half}(\uFec - \uFe_{i+k,j-1}) \right) \\
&+\frac{\dt^2}{2\epsilon^2\dx\dy} \aFxy^T\sum_{k=\pm1}\left( \g_{i+\frac{k}{2},j+\frac{k}{2}}(\uFe_{i+k,j+k} - \uFec) - \g_{i+\frac{k}{2},j-\frac{k}{2}}(\uFe_{i+k,j-k} - \uFec)\right)\:,
\end{alignedat}
\end{equation}
where $\uF=[\uM, \epsilon\um^T]^T$ and the vectors $\aFxx$, $\aFxy$, and $\aFyy$ are defined in the beginning of this section.
Collecting the terms in \eqref{eq:macro_discre_2D_bound_1} based on the spatial indices leads to
\begin{equation}
\label{eq:macro_discre_2D_bound_2}
\begin{alignedat}{2}
\Diff\geq
&-\frac{\g_{\max}}{\epsilon^2}\left(\frac{2\dt^2}{\dx^2} \left|\aFxx^T\uFec \right|
+ \frac{\dt^2}{\dx\dy} \left|\aFxy^T\uFec\right| + \frac{2\dt^2}{\dy^2} \left|\aFyy^T\uFec \right| \right)\\
&+ \frac{\dt^2}{2\epsilon^2\dx^2} \sum_{k=\pm1}\g_{i+\frac{k}{2},j} \aFxx^T {\uF}^n_{i+k,j}
+ \frac{\dt^2}{2\epsilon^2\dy^2} \sum_{\ell=\pm1}\g_{i,j+\frac{\ell}{2}} \aFyy^T {\uF}^n_{i,j+\ell} \\
&+ \frac{\dt^2}{4\epsilon^2}\left(\frac{\aFxx^T}{\dx^2} + 2\frac{\aFxy^T}{\dx\dy} + \frac{\aFyy^T}{\dy^2}\right)\sum_{k=\pm1}\g_{i+\frac{k}{2},j+\frac{k}{2}}  {\uF}^n_{i+k,j+k}\\
&+ \frac{\dt^2}{4\epsilon^2}\left(\frac{\aFxx^T}{\dx^2} - 2\frac{\aFxy^T}{\dx\dy} + \frac{\aFyy^T}{\dy^2}\right)\sum_{k=\pm1}\g_{i+\frac{k}{2},j-\frac{k}{2}}  {\uF}^n_{i+k,j-k}\:,
\end{alignedat}
\end{equation}
where the inequality follows from dropping nonnegative terms, taking absolute values, and bounding all $\g$'s with $\g_{\max}$ in the first term. 
From \eqref{cond:Dxx}, \eqref{cond:Dyy}, and \eqref{cond:Dxy2}, all but the first term on the right-hand side of \eqref{eq:macro_discre_2D_bound_2} are nonnegative and thus can be dropped from the inequality.
We then apply \eqref{cond:Dxx}, \eqref{cond:Dyy}, and \eqref{cond:Dxy1} on the remaining term, which yields
\begin{equation}
\Diff\geq -\g_{\max}\left(\frac{2\dt^2}{\epsilon^2\dx^2} + \frac{\dt^2}{2\epsilon^2\dx\dy} +  \frac{2\dt^2}{\epsilon^2\dy^2}\right) \uMec \:.
\label{eq:macro_discre_2D_bound_3}
\end{equation}
After plugging this lower bound into \eqref{eq:macro_discre_2D_bound}, it now suffices to show that
\begin{equation}
\left(1 - \g_{\max} \left(\frac{2\Theta\dt}{\epsilon\dx} + \frac{2\Theta\dt}{\epsilon\dy} 
+ \frac{2\dt^2}{\epsilon^2\dx^2} + \frac{\dt^2}{2\epsilon^2\dx\dy}+  \frac{2\dt^2}{\epsilon^2\dy^2}\right) \right)\uMec\geq0\:.
\label{eq:macro_discre_2D_bound_4}
\end{equation}
From \eqref{cond:CFL}, \eqref{eq:macro_discre_2D_bound_4} holds and the proof is complete.
\end{proof}

Theorem~\ref{thm:positivity} provides sufficient conditions \eqref{cond:Tx}--\eqref{cond:CFL} to preserve positivity of the particle concentrations with the proposed scheme.
In general, these conditions are not readily satisfied.
In Section~\ref{subsec:limiters}, we review two {\limname} limiters proposed in \cite{LH-2016} and adopt them to enforce Conditions~\eqref{cond:Tx}--\eqref{cond:Dxy2}.
For Condition~\eqref{cond:CFL}, we show in Section~\ref{subsec:pos_cfl} that this condition is satisfied if a CFL-type time-step restriction is imposed.

\subsection{{\Limname} limiters}
\label{subsec:limiters}

The Conditions~\eqref{cond:Tx}--\eqref{cond:Dxy2} are all \emph{physical}, i.e., for a given $\uF\in\bbR^n$, if the spectral expansion $h(\Omega):=\bfm^T\uF$ is nonnegative on $\bbS^2$, then $\uF=:[\uM,\epsilon\um^T]^T$ satisfies these conditions.
Thus, enforcing these physical conditions does not affect the spectral expansions that are already nonnegative.

For a given angular spectral expansion with nonnegative mean, the {\limname} limiters considered in \cite{LH-2016} give approximations that are nonnegative pointwisely on a specific quadrature set while preserving the mean. 
We refer to these limiters as pointwise limiters in this paper.
It is straightforward to verify that the pointwise nonnegativity condition required by these limiters is stronger than Conditions~\eqref{cond:Tx}--\eqref{cond:Dxy2}.
Thus we borrow the idea of these pointwise limiters, but relax them to enforce \eqref{cond:Tx}--\eqref{cond:Dxy2}.  
These new, relaxed limiters preserve the values of the macro coefficients $\uM$ and modify the micro coefficients $\um$ to satisfy \eqref{cond:Tx}--\eqref{cond:Dxy2}.
We expect these relaxed limiters to be more efficient than the pointwise limiters in terms of accuracy and computation cost, since the relaxed limiters are less likely to be active and they enforces weaker conditions.

We first consider the linear scaling ({\ls}) limiter \cite{Liu-Osher-1996,Zhang-Shu-2010,Zhang-Shu-2011}, which damps the micro coefficients uniformly until some desirable condition $\cond$ on $\uF$ is satisfied.
Specifically, given $\uF=[\uM,\epsilon\um^T]^T$ with $\uM\geq0$, the {\ls} limiter produces an approximation $\uFls:=[\uMls,\epsilon\umls^T]^T$ such that $\uMls=\uM$ and
\begin{equation}
\umls=\alpha_{\lsnm}\um \text{ with } \alpha_{\lsnm}:=\argmax_{\alpha\in[0,1]}\left\{\alpha\colon[\uM,\epsilon\alpha\um^T]^T \text{ satisfies } {\cond}\right\}\:.
\label{eq:ls_limiter}
\end{equation}
The second limiter considered in this paper is the optimization-based ({\opt}) limiter \cite{LHMOT-2016}, which finds the best approximation to the micro coefficients, in the $\ell^2$ sense, that still satisfies $\cond$.
Specifically, given $\uF:=[\uM,\epsilon\um^T]^T$ with $\uM\geq0$, the {\opt} limiter gives an approximation $\uFopt:=[\uMopt,\epsilon\umopt^T]^T$ such that $\uMopt=\uM$ and 
\begin{equation}
\umopt=\argmin_{\vm\in \bbR^{\nm}}\left\{\frac{1}{2}\|\vm-\um\|_2^2 \colon [\uM,\epsilon\vm^T]^T \text{ satisfies } {\cond} \right\}\:.
\label{eq:opt_limiter}
\end{equation}


Here we apply these limiters on $\uFec$ at each $((x_i,y_j), t^n)$ on the space-time mesh in order to enforce conditions \eqref{cond:Tx}--\eqref{cond:Dxy2}.
We embed the limiters into the proposed scheme such that, when $\uFec$ violates any of \eqref{cond:Tx}--\eqref{cond:Dxy2}, we compute the limited coefficients $\uFecls$ or $\uFecopt$ and then proceed with $\uFec$ replaced by $\uFecls$ or $\uFecopt$.
Since \eqref{cond:Tx}--\eqref{cond:Dxy2} are relaxed from the pointwise nonnegativity condition, we refer to these relaxed limiters as {\lsrlx} and {\optrlx}, respectively.


In the numerical experiments reported in Section~\ref{sec:num_results}, we compare the {\lsrlx} and {\optrlx} limiters as well as their pointwise versions, denoted respectively as {\lspw} and {\optpw}, considered in \cite{LH-2016}.
The {\lspw} and {\optpw} limiters are formulated by replacing $\cond$ in \eqref{eq:ls_limiter} and \eqref{eq:opt_limiter}, respectively, by the analogous pointwise condition.
From the numerical results in Section~\ref{sec:num_results}, we confirm that the {\lsrlx} and {\optrlx} limiters are more efficient than the {\lspw} and {\optpw} limiters.

\subsection{Positivity time-step restriction}
\label{subsec:pos_cfl}
In this section, we show that Condition~\eqref{cond:CFL} in Theorem~\ref{thm:positivity} is satisfied under a CFL-type time-step restriction stated in the following lemma.

\begin{lemma}
\label{lemma:Pos_CFL}
Condition \eqref{cond:CFL} holds if $\dt$ satisfies
\begin{equation}
\dt\leq \max\left\{\dt_{\textup{hyp}},\,\dt_{\textup{par}}\right\}\:,
\label{eq:Pos_CFL}
\end{equation}
where
\begin{equation}
\dt_{\textup{hyp}} := 2\left(\sqrt{\frac{9}{8}+\Theta^2}-\Theta\right) \epsilon \left(\frac{\dx\dy(\dx+\dy)}{4\dx^2+\dx\dy+4\dy^2}\right)\:,
\end{equation}
\begin{equation}
\dt_{\textup{par}} := \sig{s}^{\min}  \left(\frac{\dx^2\dy^2}{(2+\Theta^2)\dx^2+(\half+2\Theta^2)\dx\dy+(2+\Theta^2)\dy^2}\right)\:,
\end{equation}
and the constant $\Theta=\frac{1}{2-\theta}$ depends on the minmod parameter $\theta\in(1,2)$ in \eqref{eq:slope_minmod}.
\end{lemma}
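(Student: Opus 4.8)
The plan is to show that Condition~\eqref{cond:CFL} --- which asks that
\[
\g_{\max}\left(\frac{2\Theta\dt}{\epsilon\dx}+\frac{2\Theta\dt}{\epsilon\dy}+\frac{2\dt^2}{\epsilon^2\dx^2}+\frac{\dt^2}{2\epsilon^2\dx\dy}+\frac{2\dt^2}{\epsilon^2\dy^2}\right)\le 1
\]
--- follows whenever $\dt$ satisfies the bound in~\eqref{eq:Pos_CFL}. First I would recall from~\eqref{eq:gamma_max_def} that $\g_{\max}=\epsilon^2/(\epsilon^2+\sig{s}^{\min}\dt)$, so $\g_{\max}\le 1$ and also $\g_{\max}\le \epsilon^2/(\sig{s}^{\min}\dt)$. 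These give two complementary ways to control the left-hand side, and the $\max$ in~\eqref{eq:Pos_CFL} is exactly the disjunction of the two resulting sufficient conditions: it suffices that \emph{either} bound alone does the job.

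For the first (``hyperbolic'') branch I would use $\g_{\max}\le 1$ and reduce~\eqref{cond:CFL} to a scalar quadratic inequality in $\dt$. Writing $a:=\big(\tfrac{2\Theta}{\epsilon\dx}+\tfrac{2\Theta}{\epsilon\dy}\big)$ and $b:=\big(\tfrac{2}{\epsilon^2\dx^2}+\tfrac{1}{2\epsilon^2\dx\dy}+\tfrac{2}{\epsilon^2\dy^2}\big)$, it suffices to have $b\,\dt^2+a\,\dt-1\le 0$, i.e. $\dt\le\big(\sqrt{a^2+4b}-a\big)/(2b)$. Substituting and simplifying $a$ and $b$ over the common denominators $\epsilon\dx\dy$ and $\epsilon^2\dx^2\dy^2$ (so $a=\tfrac{2\Theta(\dx+\dy)}{\epsilon\dx\dy}$ and $b=\tfrac{4\dx^2+\dx\dy+4\dy^2}{2\epsilon^2\dx^2\dy^2}$), the factors of $\epsilon$, $\dx$, $\dy$ collect and the bound becomes precisely $\dt_{\textup{hyp}}$ as stated, with the constant $2\big(\sqrt{9/8+\Theta^2}-\Theta\big)$ emerging from $\sqrt{a^2+4b}-a$ after dividing through. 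The remaining ``parabolic'' branch uses $\g_{\max}\le\epsilon^2/(\sig{s}^{\min}\dt)$ instead: then the left-hand side of~\eqref{cond:CFL} is bounded by $\tfrac{\epsilon^2}{\sig{s}^{\min}\dt}\big(a\,\dt+b\,\dt^2\big)=\tfrac{\epsilon^2}{\sig{s}^{\min}}(a+b\,\dt)$, and requiring this to be $\le 1$ gives a \emph{linear} inequality $\dt\le \tfrac{\sig{s}^{\min}/\epsilon^2-a}{b}$; once more collecting denominators turns this into $\dt_{\textup{par}}$ after absorbing $\Theta$ into the stated coefficients. (One should note the slight asymmetry: the $a$-term in the parabolic estimate also carries a $\Theta$, which is why $\dt_{\textup{par}}$'s denominator has the $\Theta^2$ pattern shown.)

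I expect the main obstacle to be purely bookkeeping: carrying the $\epsilon$, $\dx$, $\dy$ factors and the $\Theta$-dependence through the quadratic formula cleanly enough that the expressions collapse to exactly $\dt_{\textup{hyp}}$ and $\dt_{\textup{par}}$, and in particular verifying that the coefficient $4\dx^2+\dx\dy+4\dy^2$ (hyperbolic case) and $(2+\Theta^2)\dx^2+(\tfrac12+2\Theta^2)\dx\dy+(2+\Theta^2)\dy^2$ (parabolic case) are the correct combinations --- the cross term's coefficient $\tfrac12$ (rather than $\tfrac14$) is the place most prone to a slip. There is no analytic difficulty beyond this; the only conceptual point is recognizing that the two bounds on $\g_{\max}$ are genuinely complementary, so the $\max$ (not $\min$) in~\eqref{eq:Pos_CFL} is what appears, and that neither branch requires any hidden assumption on the mesh aspect ratio beyond what is already posited in Section~\ref{subsec:Space_discr}.
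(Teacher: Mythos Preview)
Your hyperbolic branch is essentially the paper's argument (replacing $\g_{\max}$ by $1$ is exactly the paper's step of dropping the $\sig{s}^{\min}\dt$ term from $h_{\dt}(\epsilon)$), but the algebra does \emph{not} collapse to $\dt_{\textup{hyp}}$ as you claim. The exact positive root of $b\,\dt^2+a\,\dt-1=0$ equals $\dt_{\textup{hyp}}$ only when $\dx=\dy$; for general meshes one has $a^2+4b=\tfrac{4}{\epsilon^2\dx^2\dy^2}\big((2+\Theta^2)\dx^2+(\tfrac12+2\Theta^2)\dx\dy+(2+\Theta^2)\dy^2\big)$, and the paper reaches the stated $\dt_{\textup{hyp}}$ only after invoking the extra inequality $(2+\Theta^2)\dx^2+(\tfrac12+2\Theta^2)\dx\dy+(2+\Theta^2)\dy^2\ge(\tfrac98+\Theta^2)(\dx+\dy)^2$, which amounts to $(\dx-\dy)^2\ge 0$. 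This is a cosmetic gap: your exact root is still a valid (indeed sharper) sufficient condition, it just is not the expression in the statement.

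The parabolic branch, however, has a genuine gap. Using $\g_{\max}\le\epsilon^2/(\sig{s}^{\min}\dt)$ and your $a,b$ gives the linear condition
\[
\dt \;\le\; \frac{2\dx^2\dy^2}{4\dx^2+\dx\dy+4\dy^2}\Big(\sig{s}^{\min}-\frac{2\Theta\epsilon(\dx+\dy)}{\dx\dy}\Big),
\]
which still depends on $\epsilon$ (and is even negative for large $\epsilon$). This cannot be ``collected'' into the $\epsilon$-free $\dt_{\textup{par}}$; in particular no $\Theta^2$ appears, because your route never squares $\Theta$. The paper obtains $\dt_{\textup{par}}$ by a different idea: it rewrites (C7) as $h_{\dt}(\epsilon)\ge 0$ with $h_{\dt}$ a convex quadratic in $\epsilon$, minimizes over $\epsilon$ (the minimizer is $\epsilon^*=\Theta\dt(\dx+\dy)/(\dx\dy)$), and requires $h_{\dt}(\epsilon^*)\ge 0$. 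Completing the square in $\epsilon$ is precisely what produces the $\Theta^2$ contributions $(2+\Theta^2)\dx^2+(\tfrac12+2\Theta^2)\dx\dy+(2+\Theta^2)\dy^2$ and yields an $\epsilon$-independent bound. Your ``absorbing $\Theta$ into the stated coefficients'' is not a derivation; you need the minimization-in-$\epsilon$ step (or an equivalent) to arrive at $\dt_{\textup{par}}$.
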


\begin{proof}
From the definition of $\g_{\max}$ in \eqref{eq:gamma_max_def}, \eqref{cond:CFL} is equivalent to
\begin{equation}
h_{\dt}(\epsilon):= \epsilon^2+\sig{s}^{\min}\dt
-\dt\left(\frac{2\Theta\epsilon}{\dx}+\frac{2\Theta\epsilon}{\dy}+\frac{2\dt}{\dx^2}+ \frac{\dt}{2\dx\dy} + \frac{2\dt}{\dy^2}\right)\geq0\:.
\label{eq:h_dt_def}
\end{equation}
The minimizer of the parabola $h_{\dt}$ is given by
$\epsilon^*=\frac{\Theta\dt(\dx+\dy)}{\dx\dy}\:.$
Therefore, to prove \eqref{cond:CFL}, it suffices to show that
\begin{equation}
h_{\dt}(\epsilon^*)=\sig{s}^{\min}\dt - \frac{\dt^2}{\dx^2\dy^2}
\left(\Theta^2{(\dx+\dy)^2}+(2\dx^2+\half\dx\dy+2\dy^2)\right)\geq0\:,
\end{equation}
which leads to
\begin{equation}
\dt \leq \sig{s}^{\min} \left(\frac{\dx^2\dy^2}{(2+\Theta^2)\dx^2+(\half+2\Theta^2)\dx\dy+(2+\Theta^2)\dy^2}\right) = \dt_{\textup{par}}\:.
\label{eq:CFL_par}
\end{equation}
On the other hand, since $\sig{s}^{\min}>0$, it follows from \eqref{eq:h_dt_def} that 
\begin{equation}
\epsilon^2-2\dt\left(\frac{\Theta\epsilon}{\dx}+\frac{\Theta\epsilon}{\dy}+\frac{\dt}{\dx^2}+ \frac{\dt}{4\dx\dy} + \frac{\dt}{\dy^2}\right)\geq0
\label{eq:H_dt_def}
\end{equation}
is also a sufficient condition for \eqref{cond:CFL}. 
Since the left-hand side of \eqref{eq:H_dt_def} is quadratic in $\dt$, we obtain
\begin{equation}
\dt \leq 2\left(\sqrt{\frac{9}{8}+\Theta^2}-\Theta\right) \epsilon \left(\frac{\dx\dy(\dx+\dy)}{4\dx^2+\dx\dy+4\dy^2}\right) = \dt_{\textup{hyp}}
\label{eq:CFL_hyp}
\end{equation}
by solving \eqref{eq:H_dt_def} and applying the inequality 
\begin{equation}
(2+\Theta^2)\dx^2+(\half+2\Theta^2)\dx\dy+(2+\Theta^2)\dy^2\geq \left(\frac{9}{8}+\Theta^2\right)(\dx+\dy)^2\:.
\end{equation}
Since \eqref{eq:CFL_par} and \eqref{eq:CFL_hyp} are sufficient conditions for \eqref{cond:CFL}, the claim is proved.
\end{proof}

Since the aspect ratio $(\dx/\dy)$ is assumed to be bounded from above and away from zero, the time-step restriction \eqref{eq:Pos_CFL} switches from a hyperbolic CFL condition to a parabolic CFL condition as $\epsilon\to0$. 
Specifically, when $\epsilon\gg (\dx+\dy)$, \eqref{eq:Pos_CFL} takes the form of a hyperbolic CFL condition, i.e., $\dt\leq C\epsilon(\dx+\dy)$. 
On the other hand, when $\epsilon \ll (\dx+\dy)$, \eqref{eq:Pos_CFL} switches to a parabolic CFL condition, i.e., $\dt\leq C \dx\dy$.
The switch between time-step restrictions is desirable for AP schemes, since the hyperbolic CFL condition becomes prohibitive as $\epsilon\to0$.

\begin{remark}
\label{remark:consistency}
The time-step restriction \eqref{eq:Pos_CFL} justifies the time-step assumption $\dt\geq C(\dx+\dy)^3$, which is made in Section~\ref{subsubsec:spatial_discretization_hyp} and is invoked in the AP property analysis in Section~\ref{subsec:spatial_discretization_AP}.
In addition, if the time step $\dt$ is chosen to be the largest step allowed by \eqref{eq:Pos_CFL}, then \eqref{eq:artificial_dissipation_bound} can be rewritten as 
\begin{equation}
\label{eq:artificial_dissipation_consistency}
\CLxF \dx^3 = \Theta\frac{\g_{\max}}{\epsilon}\dx^3=\Theta
\frac{\epsilon\dx^3}{\epsilon^2+\sig{s}^{\min}\dt} \leq 
\Theta\dx^2\frac{{\epsilon^2}+{\dx^{2}}}{2(\epsilon^2+\sig{s}^{\min}\dt)} \leq
C_x \dx^2  \:,
\end{equation}
with constant $C_x$ independent of $\epsilon$ and $\dx$.
Here the first inequality follows from Young's inequality and the second inequality uses the fact that $\dt$ is the largest step allowed by \eqref{eq:Pos_CFL}.
\eqref{eq:artificial_dissipation_consistency} implies that the artificial dissipation vanishes as $\dx\to0$.
Thus, we conclude that the proposed scheme is consistent and AP under \eqref{eq:Pos_CFL}.
\end{remark}

\subsection{Extension to three dimensions}
\label{subsec:3D}

The spatial discretization and positivity conditions proposed in Sections~\ref{subsec:Space_discr} and \ref{subsec:pos_proof} are for the micro-macro system corresponding to the reduced linear kinetic equation \eqref{eq:transport_reduced} in two dimensions introduced in Section~\ref{subsec:reduced_kinetic_eqn}.
For the original three-dimensional kinetic equation \eqref{eq:transport_scaled}, the proposed spatial discretization can be extended naively to obtain a positive-preserving AP scheme, under an extended version of positivity conditions.
However, the straightforward extension results in a discretization that is defined on alternating spatial grids in the diffusion limit. 
The alternating grid comes from the fact that such extension of diffusion stencils \eqref{eq:D2_macro_def} and \eqref{eq:D2_micro_def} calculates the mixed derivatives with the ``edge" values on the three-dimensional stencil.
Specifically, to compute the mixed derivatives at $(x_i,y_j,z_k)$, such extension uses function values at edge points $(x_{i\pm1},y_{j\pm1},z_k)$, $(x_{i\pm1},y_j,z_{k\pm1})$, $(x_i,y_{j\pm1},z_{k\pm1})$.
Meanwhile, to form physical positivity conditions at the edge points, the averaging parameters in the second derivative discretization need to be such that $c_{\pm1}=\frac{1}{4}$ and $c_{0}=0$, where $c_{\pm1}$ denotes the averaging weight on the edge points and $c_{0}$ denotes the averaging weight on the ``face" points $(x_{i\pm1},y_j,z_k)$, $(x_{i},y_{j\pm1},z_k)$, $(x_i,y_{j},z_{k\pm1})$.
The zero weight on the face points makes the extended diffusion discretization a 13-point stencil, including the center and edge points. 
For any two adjacent points, the two diffusion stencils are completely disjoint. 
Thus the resulting discretization is on alternating spatial grids, which may lead to oscillatory solutions.
See, for example, \cite{Haack-Hauck-2008} for relevant discussions.

A possible approach to avoid the occurrence of alternating grids is to modify the extended diffusion stencil such that the mixed derivatives are computed with the values at ``corner" points  $(x_{i\pm1},y_{j\pm1},z_{k\pm1})$.
In this case, the averaging process in the second derivative discretization is performed on the face and corner points, instead of the face and edge points. 
In the case of constant scattering cross-sections, this procedure leads to a 15-point stencil that does not suffer from the problem of alternating grids and is expected to preserve positivity under physical positivity conditions.
The extension to problems with general scattering cross-sections is in the scope of future work.

\section{Numerical results}
\label{sec:num_results}

In this section, we test the performance of the scheme in solving the reduced kinetic equation \eqref{eq:transport_reduced} for two benchmark problems. We also run a space-time accuracy test.

\subsection{Line source problem}
\label{subsec:linesource}

The line source problem and its semi-analytic solution were first considered in \cite{ganapol-1999}.
The problem has served as a performance benchmark in studying various numerical schemes for solving linear kinetic equations \cite{Brunner-2002,GH12,Hauck-McClarren-2009,Hauck-McClarren-2010,Radice-2013}. It involves an isotropic initial condition supported at the origin of the spatial domain.
In our numerical simulations, the initial condition is approximated by a steep Gaussian distribution centered at the origin with variance $\varsigma^2=9\times 10^{-4}$, i.e.,
\begin{equation}
f^{\rm{in}}(r,\Omega) \approx \frac{1}{4\pi}
\left(\frac{1}{{2 \pi  \varsigma^2}} 
e^{\frac{-(x^2 + y^2)}{2 \varsigma^2}}\right) \:,
\label{eq:Gaussian_ic}
\end{equation} 
and the cross-sections are chosen to be $\sig{t}=\sig{s} = 1.0$.  Tests are run in the kinetic regime ($\epsilon=1$) and the diffusive regime ($\epsilon=10^{-3}$).

The simulation is performed on a truncated spatial domain: a $3 \times 3$  square centered at the origin with zero boundary condition.  The final time is $t_{\rm{final}} = 1.0$ when $\epsilon=1$ and $t_{\rm{final}} = 0.1 $ when $\epsilon=10^{-3}$.
We choose the angular spectral approximation order to be $N=11$ for the kinetic tests and $N=3$ for the diffusive tests.
We perform the computation on a $150\times150$ uniform square spatial mesh with the time step chosen as 0.9 times the maximum value allowed by condition \eqref{eq:Pos_CFL}.
The filter function $\kappa$ is given by $ \kappa(\lambda)=\frac{1}{1+\lambda^4}$ with the filtering parameter ${\sigF}=56.2$.
The minmod parameter in \eqref{eq:slope_minmod} is chosen to be $\theta=1.5$.
In each regime, we solve the problem using the proposed AP scheme with the {\lsrlx} and {\optrlx} limiters and, for comparison, the pointwise {\lspw} and {\optpw} limiters considered in \cite{LH-2016}. 
See Section~\ref{subsec:limiters} for the details of these {\limname} limiters.
We implement the {\lsrlx} and {\lspw} limiters by solving the maximization problems via direct evaluation, since there is no optimization required. 
On the other hand, the minimization problems for the {\optrlx} and {\optpw} limiters are solved respectively using the alternating direction method of multipliers (ADMM) \cite{Boyd-Parikh-2011} and the constraint-reduced Mehrotra-predictor-corrector method (CR-MPC) \cite{LA-2017}, with tolerance $10^{-6}$.
The optimization algorithms are chosen such that the computational cost is minimized.

In the kinetic regime, we use the semi-analytic solution given in \cite{ganapol-1999} as the reference solution.
In the diffusive regime, the reference solution is generated by solving the diffusion equation. \eqref{eq:diffusion_2D} with the explicit 9-point finite difference scheme \eqref{eq:macro_limit_2D}.
In Figure~\ref{fig:linesource_ref}, we plot the two-dimensional heat maps and one-dimensional line-outs (along the $x$-axis) of the particle concentration $\rho=\vint{f}$ in the reference solutions.

\begin{figure}[h]
   \captionsetup[subfigure]{justification=centering}
\centering
\subfloat[$\epsilon=1$, $t=1.0$]
{  \includegraphics[width=0.23\linewidth]{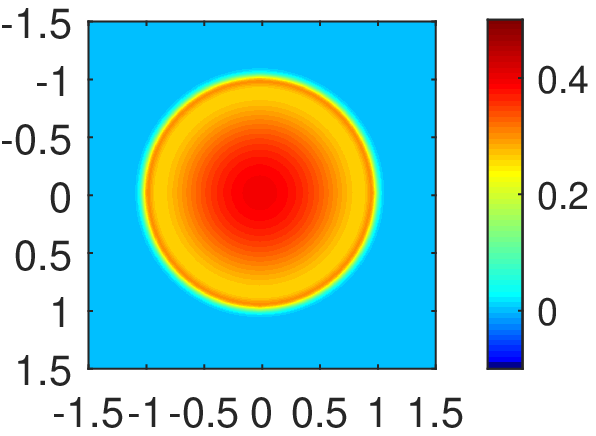}
  \label{fig:kinetic_linesource_2D}~
  \includegraphics[width=0.21\linewidth]{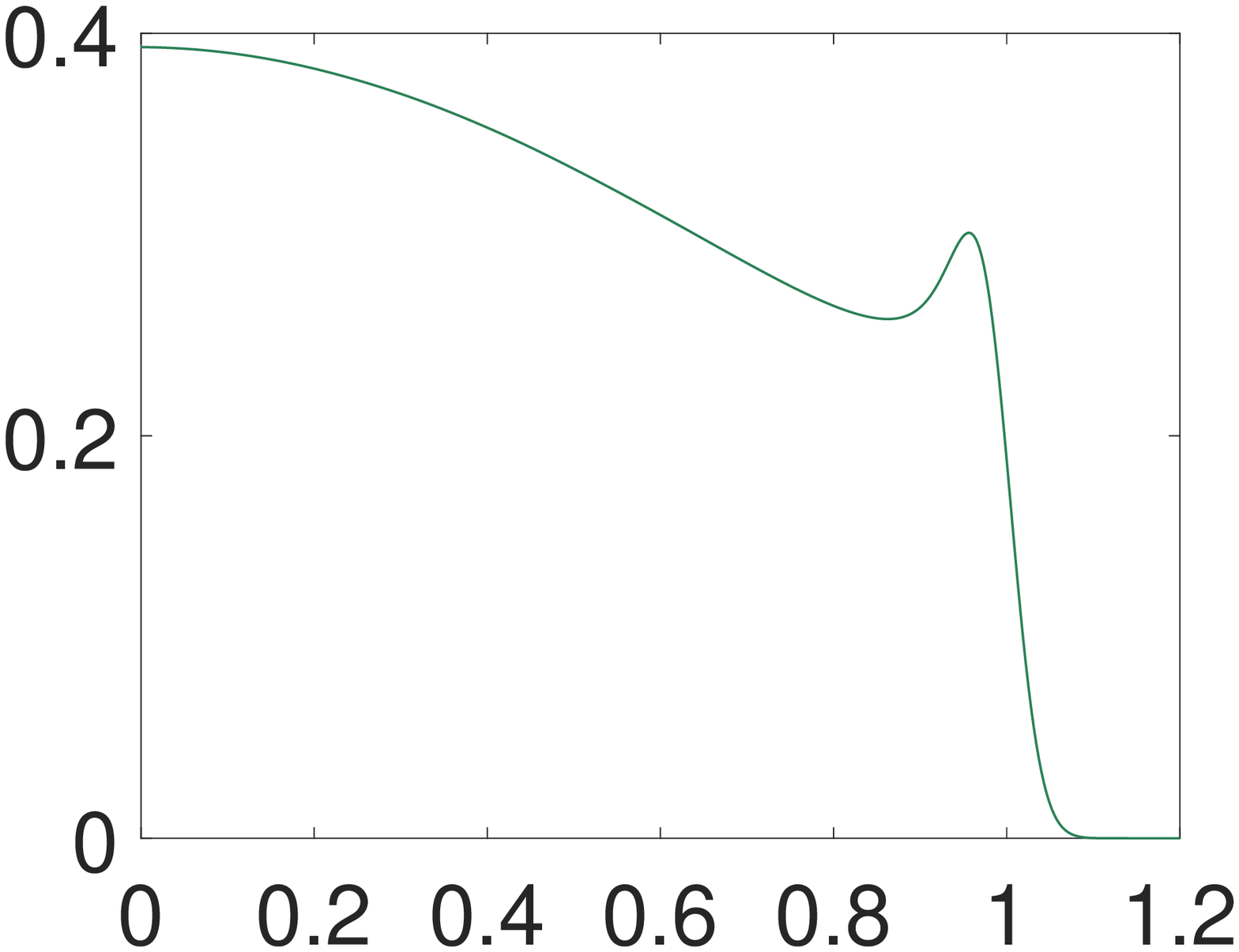}
  \label{fig:kinetic_linesource_LO}}~~
\subfloat[$\epsilon=10^{-3}$, $t=0.1$]
{\includegraphics[width=0.23\linewidth]{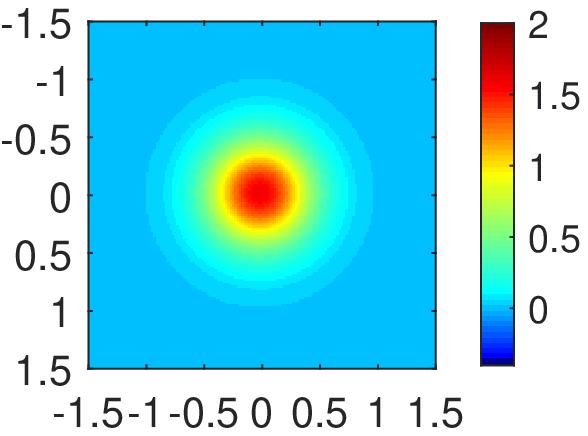}
  \label{fig:diffusion_linesource_2D}~
  \includegraphics[width=0.21\linewidth]{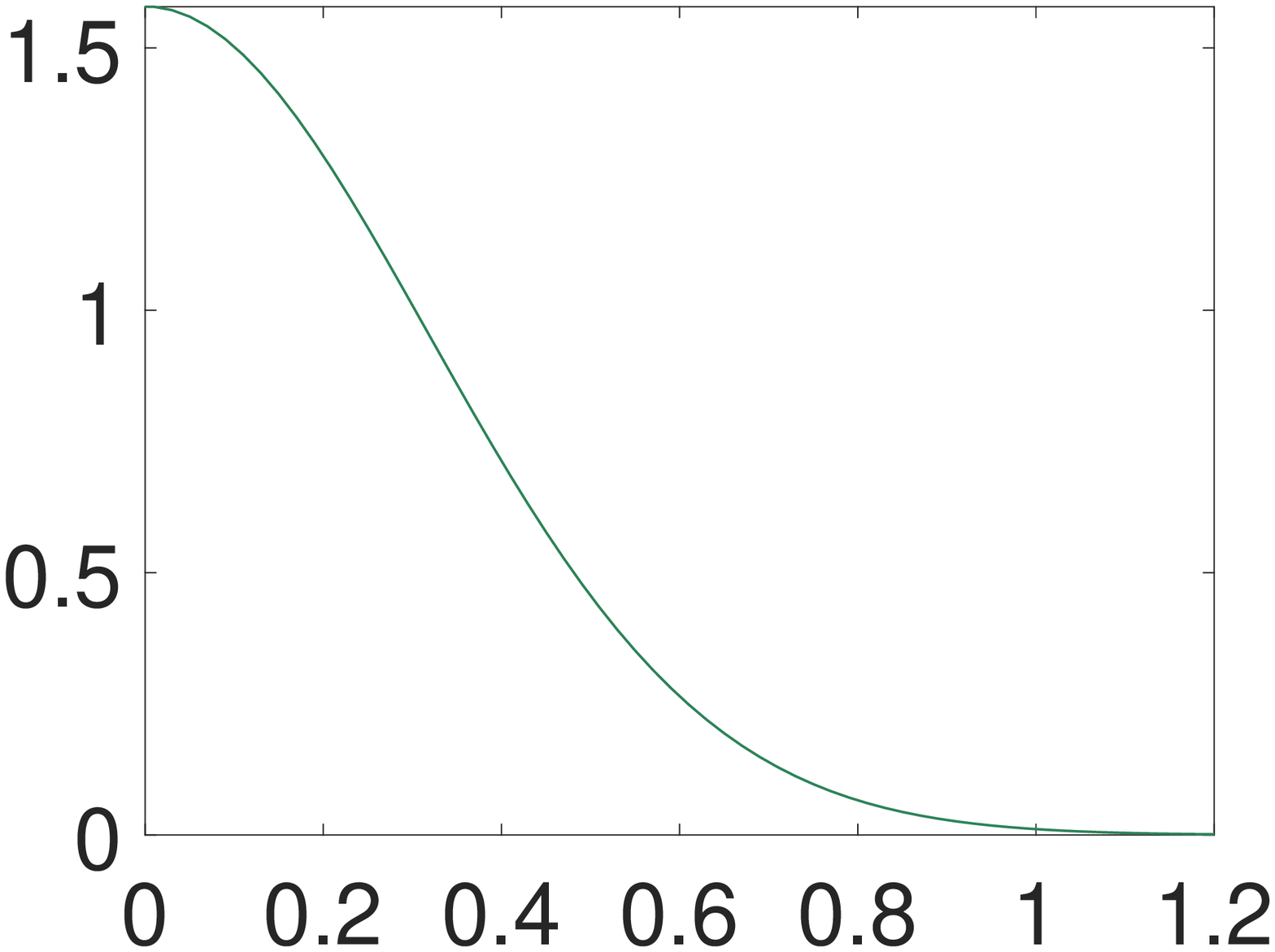}
  \label{fig:diffusion_linesource_LO}}

\caption{Reference solutions for the line source problem.  Heat maps and line outs show the particle concentration $\rho$ in the kinetic ($\epsilon=1$) and diffusive ($\epsilon=10^{-3}$) regimes.}
\label{fig:linesource_ref}
\end{figure}

Similar heat maps and line-outs of for the numerical solution in the kinetic regime ($\epsilon=1$) is shown in Figure~\ref{fig:linesource}. Each of the one-dimensional line-outs are plotted along the $x$-axis and along the direction of $45$ degrees, which shows the most inaccurate part of the solution.
For comparison, the reference kinetic solution is included in all line-out figures.  Plots for the diffusive tests are omitted because the numerical and reference solutions are visually identically.

The run time and relative $L^2$ spatial errors of the particle concentration in both the kinetic and diffusive regime are reported in Table~\ref{table:linesource_time_error}.
The relative $L^2$ spatial error is defined as
\begin{equation}
E := \frac{\|\rho_{\textup{c}}-\rho_{\textup{ref}}\|_{L^2_h(\bbR^2)}}{\|\rho_{\text{ref}}\|_{L^2_{h}(\bbR^2)}}\:,\quad\text{with}\quad\|\rho\|_{L^2_{h}(\bbR^2)}:=\left(\sum_{i,j}\rho_{i,j}^2 h^2\right)^{{1}/{2}}\:,
\label{eq:rel_L2_spat_err}
\end{equation}
where $\rho_{\text{c}}$ is the computed solution, $\rho_{\text{ref}}$ is the reference solution, the summation in \eqref{eq:rel_L2_spat_err} is taken over all $(i,j)$ such that $(x_i,y_j)$ belongs to the uniform spatial mesh, and $h=\dx=\dy$.
Here  are the particle concentrations at $t = t_{\rm{final}}$ of the computed and  solutions with various limiters, respectively.

\begin{figure}[h]
   \captionsetup[subfigure]{justification=centering}
\centering
\subfloat[{\lsrlx}]{
  \includegraphics[width=0.22\linewidth]{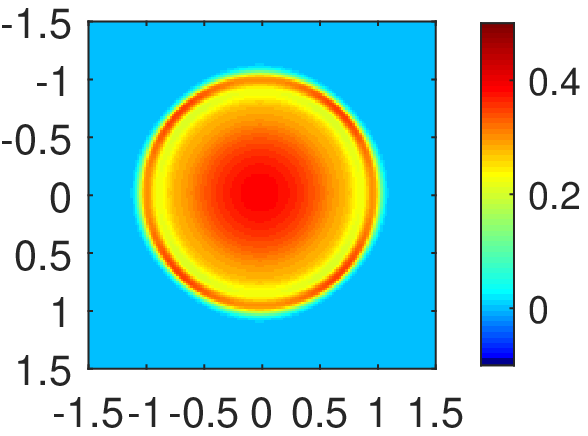}
  \label{fig:LS11_A0_IC0_transport}}~
\subfloat[{\optrlx}]{
\includegraphics[width=0.22\linewidth]{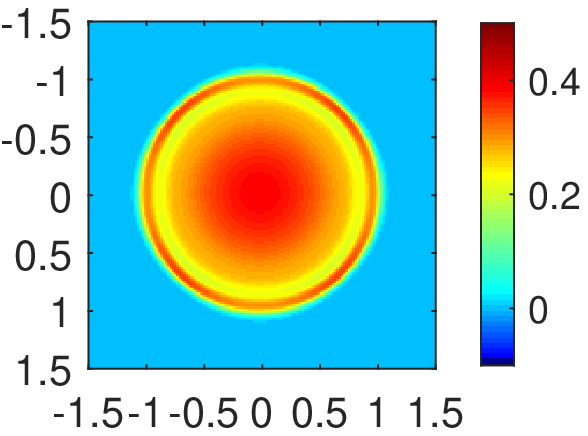}
  \label{fig:OPT11_A0_IC0_transport}}~
\subfloat[{\lspw}]{
  \includegraphics[width=0.22\linewidth]{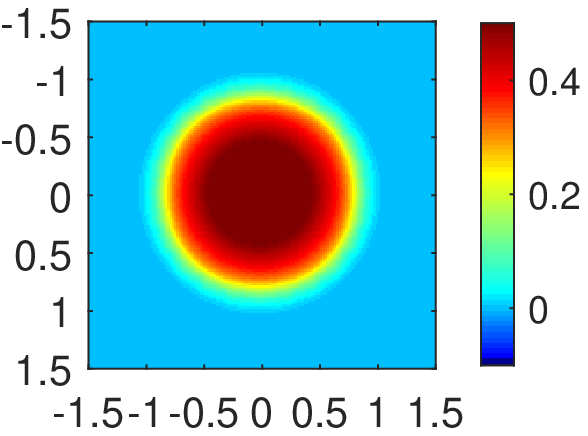}
  \label{fig:LS11_PW_A0_IC0_transport}}~
\subfloat[{\optpw}]{
\includegraphics[width=0.22\linewidth]{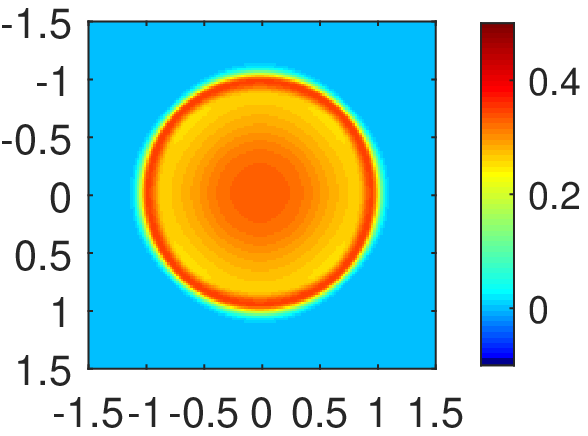}
  \label{fig:OPT11_PW_A0_IC0_transport}}\\

\subfloat[{\lsrlx}]{
  \includegraphics[width=0.22\linewidth]{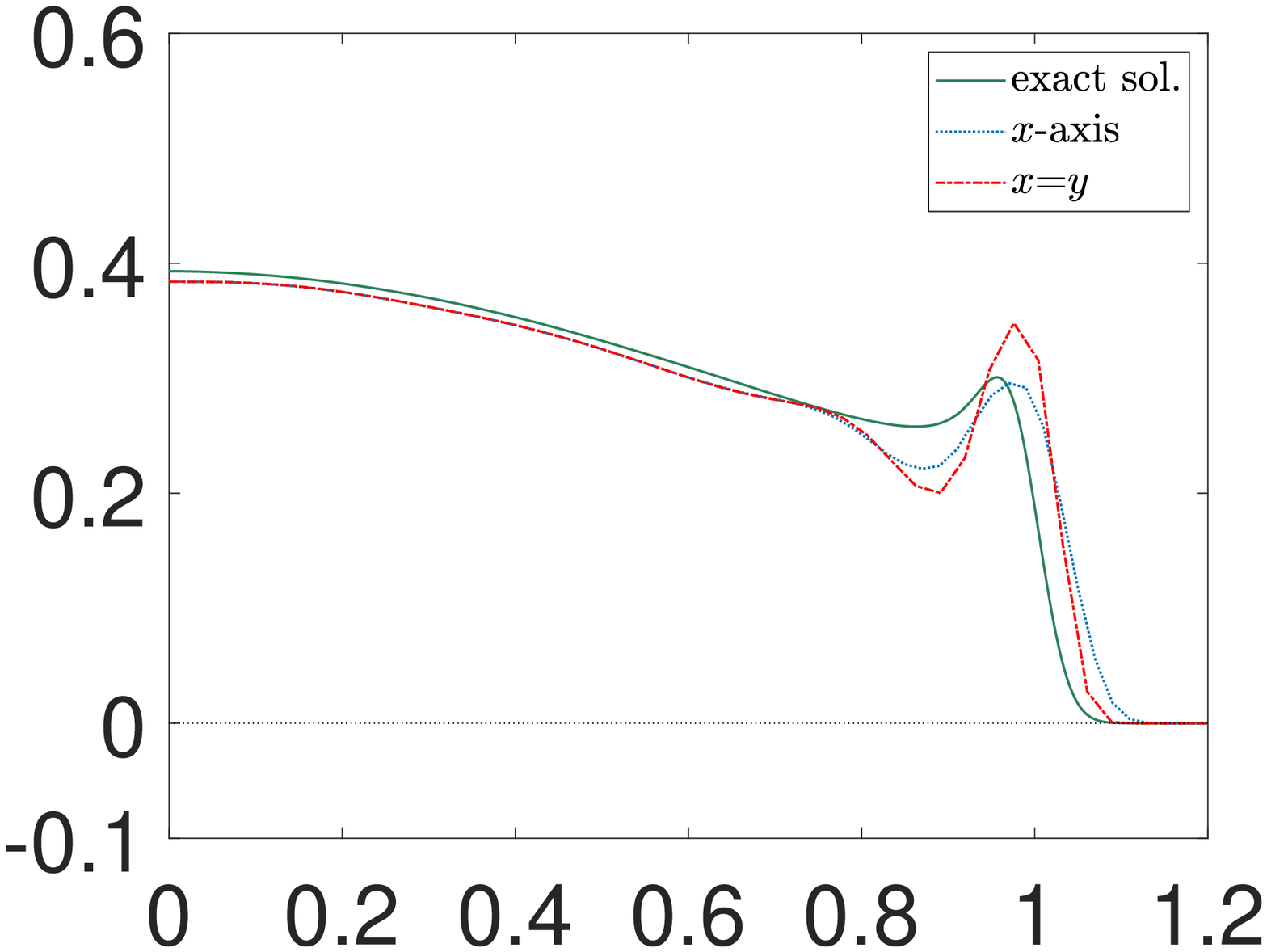}
  \label{fig:LS11_A0_IC0_transport_LO}}~
\subfloat[{\optrlx}]{
  \includegraphics[width=0.22\linewidth]{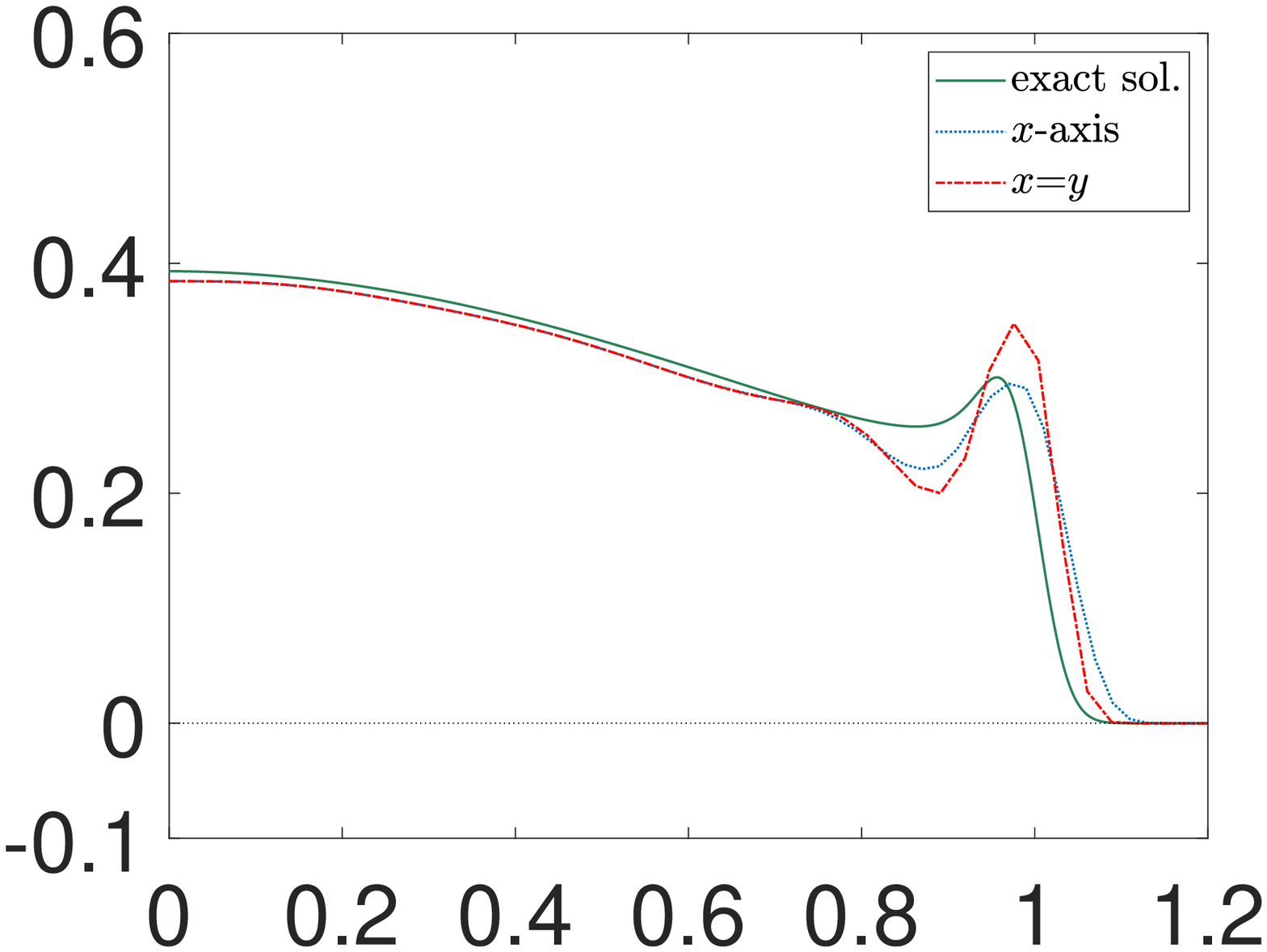}
  \label{fig:OPT11_A0_IC0_transport_LO}}~
\subfloat[{\lspw}]{
  \includegraphics[width=0.22\linewidth]{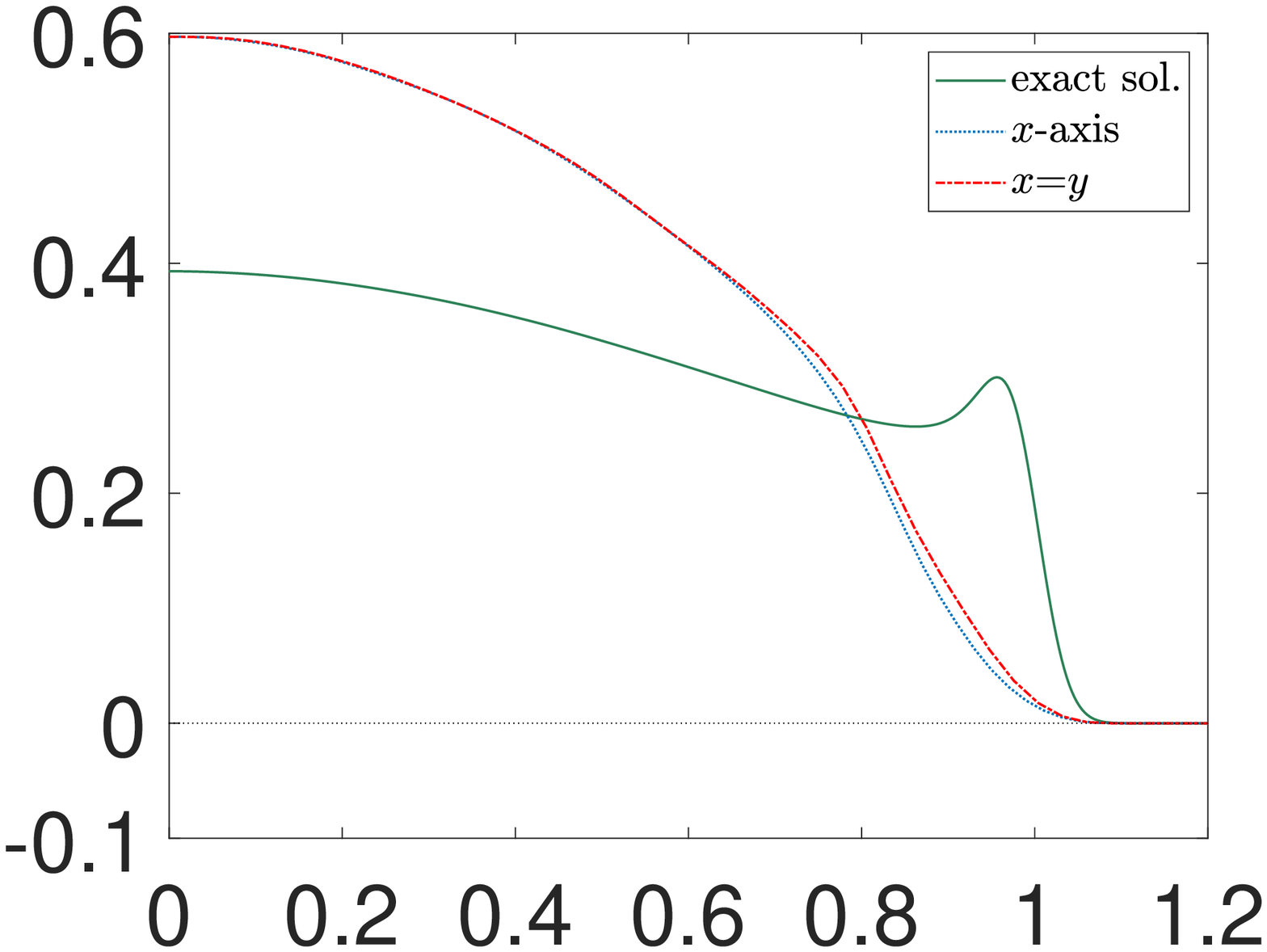}
  \label{fig:LS11_PW_A0_IC0_transport_LO}}~
\subfloat[{\optpw}]{
  \includegraphics[width=0.22\linewidth]{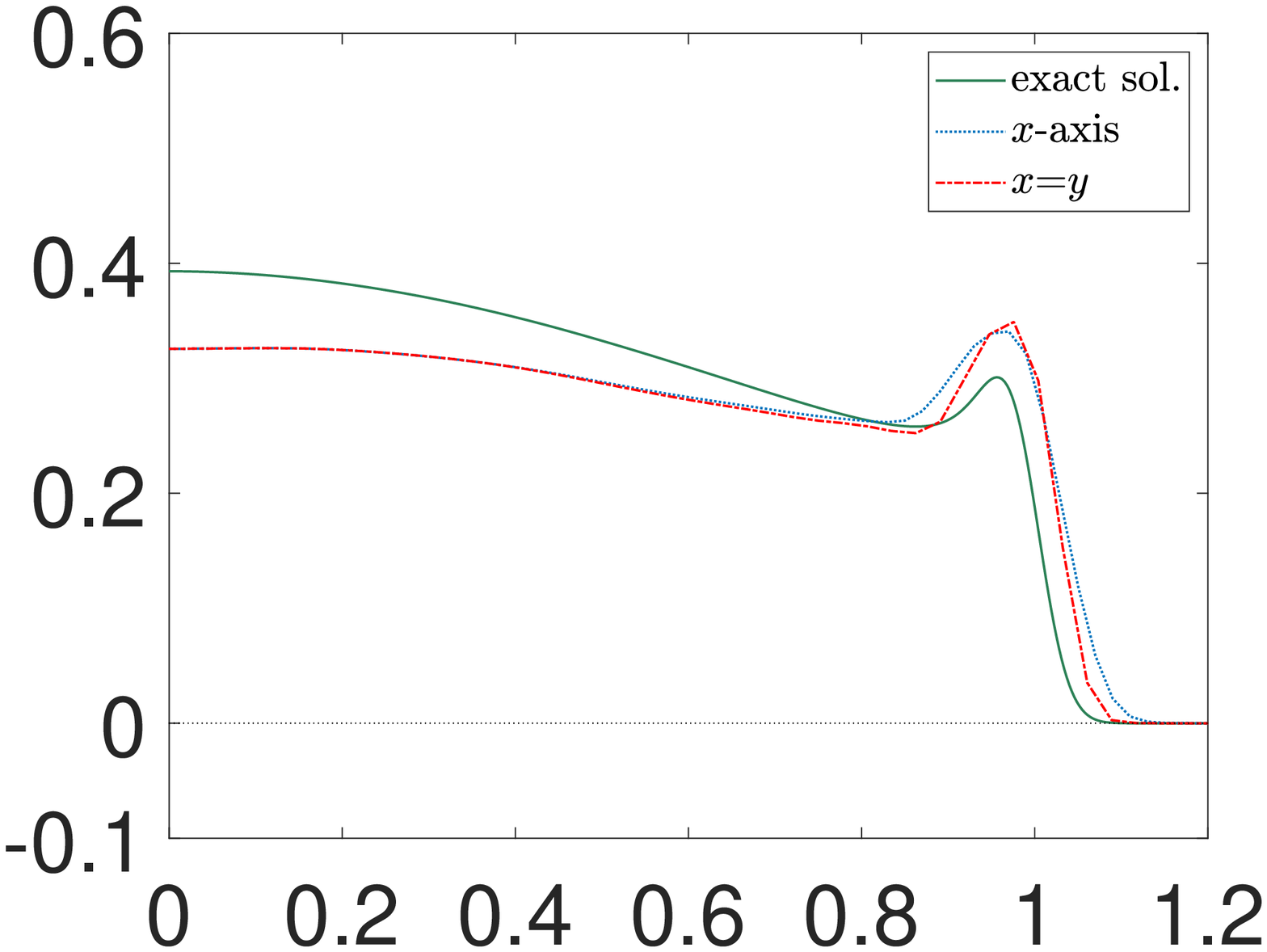}
  \label{fig:OPT11_PW_A0_IC0_transport_LO}}

\caption{Numerical solutions for the line source problem.  Heat maps and line-outs show the particle concentration $\rho$ generated with the {\lsrlx}, {\optrlx}, {\lspw}, and {\optpw} limiters in the kinetic ($\epsilon=1$) regime.
The approximation order of the {\fpn} equations is $N=11$. 
}
\label{fig:linesource}
\end{figure}

\begin{table}[h]
\centering
\begin{tabular}{|| c | c || c | c | c | c | c ||}
\toprule
\multicolumn{2}{||c||}{Limiter} &
\texttt{none} & {\lsrlx} & {\optrlx} & {\lspw} & {\optpw} \\
\hline
 {Kinetic} &
 run time   & 280   & 342    & 348   & 413 & 20847 \\
 \cline{2-7}
 $(N=11)$ & $E$ 	& 0.107 & 0.106 & 0.106 & 0.494 & 0.147 \\
\hline
  {Diffusive} &
run time  & 149   &  1040  & 1055  & 1082  &  1044 \\
 \cline{2-7}
$(N=3)$ & $E$ & 0.005 & 0.005 & 0.005 & 0.005  & 0.005 \\
\bottomrule
\end{tabular}
\caption{Run time (sec) and relative $L^2$ spatial error $E$ for the line source problem without a limiter and with the four limiters. In the diffusive regime, the positivity conditions are never violated due to the smooth solution. Thus, all limiters give identical solutions.}
\label{table:linesource_time_error}
\end{table}

In the kinetic regime ($\epsilon=1$), we observe in Figure~\ref{fig:linesource} that with the {\lsrlx}, {\optrlx}, and {\optpw} limiters, the computed solutions are reasonably accurate but slightly more diffusive compared to the reference solution, which we suspect is to the artificial dissipation terms in \eqref{eq:advection_discretization}.
Meanwhile, the solution with the {\lspw} limiter is inaccurate.
Figure~\ref{fig:linesource} also shows that the solutions with the {\lsrlx} and {\optrlx} limiters have some noticeable artifacts that affect the rotational invariance of the solution.
We believe these artifacts come from the axis-dependency of conditions~\eqref{cond:Tx}--\eqref{cond:Dxy2}, as they seem to align with the spatial axes. 
The artifacts are less noticeable in the solution from the {\optpw} limiter, which enforces a stronger pointwise positivity condition and thus applies more damping on the micro coefficients than the {\lsrlx} and {\optrlx} limiters do.

The results in the kinetic regime ($\epsilon=1$) reported in Table~\ref{table:linesource_time_error} indicate that the {\lsrlx} and {\optrlx} limiters give solutions that are as accurate as the unlimited solution.
The {\optpw} limiter gives slightly less accurate solution, while it is about $50$x more computationally expensive than the other three limiters.
In the diffusive regime ($\epsilon=10^{-3}$), the reference solution is sufficiently smooth such that conditions \eqref{cond:Tx}--\eqref{cond:Dxy2} and the pointwise positivity condition are never violated. 
Hence all computed solutions are identical and close to the reference diffusion solution.
We also notice that difference between the computational time of the limited and unlimited cases is more obvious in the diffusive regime. 
This is due to the lower approximation order $N=3$ used in the diffusive tests, which reduces the computational cost in each time step and makes the additional cost of the limiters significant.

\subsection{Problem with non-uniform scattering/absorption}
\label{subsec:nonuniform}

In this section, we test the proposed AP scheme on problems with non-uniform scattering and absorption cross-sections. 
The problem is a modification of the lattice problem formulated in \cite{Brunner-2002} and \cite{Brunner-Holloway-2005}, which is motivated by the geometry of an assembly in a nuclear reactor core. 
As in the original benchmark, a purely scattering medium with strongly absorbing mediums embedded as a checkerboard on a square spatial domain $[-1.5,1.5]\times[-1.5,1.5]$, as shown in Figure~\ref{fig:Lattice_layout}. 
Here the strong absorption regions are colored in white with $\sig{a}=9.9$ and $\sig{s} = 0.1$; the purely scattering regions are colored in black with $\sig{a}=0$ and $\sig{s}=1$.  Unlike the original lattice benchmark, there is no source.  Rather the initial condition, boundary condition, and all other specifics are identical to the ones used in the line source experiments in Section~\ref{subsec:linesource}. The computation is also run on a $150\times 150$ uniform square mesh with the maximum time step allowed by \eqref{eq:Pos_CFL} to final time $t_{\rm{final}} = 1.0$ and $t_{\rm{final}} = 0.1 $ in the kinetic ($\epsilon=1$) and diffusive ($\epsilon=10^{-3}$) regimes, respectively.
For comparison, we compute a reference kinetic solution using the second-order kinetic scheme proposed in \cite{GH12} with a high approximation order $N=37$ on a finely discretized mesh. 
A reference diffusion solution is computed by solving the diffusion equation \eqref{eq:diffusion_2D} with the 9-point finite difference scheme \eqref{eq:macro_limit_2D}.
The reference kinetic and diffusion solutions are shown in Figures~\ref{fig:transport_nonuniform} and \ref{fig:diffusion_nonuniform}, respectively.

\begin{figure}[h]
   \captionsetup[subfigure]{justification=centering}
\centering
\subfloat[Problem Layout]
{  \includegraphics[width=0.26\linewidth]{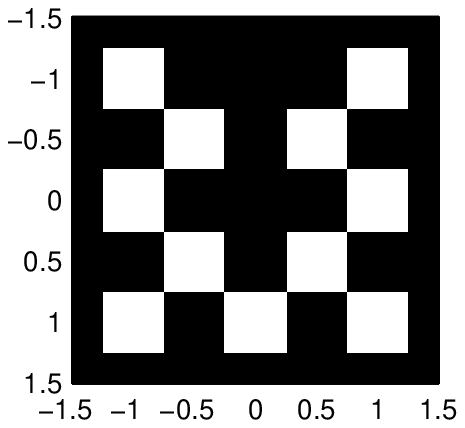}
  \label{fig:Lattice_layout}}~
\subfloat[reference transport solution, $\epsilon=1$, $t=1$][reference solution \\$\epsilon=1$]
{ \includegraphics[width=0.3\linewidth]{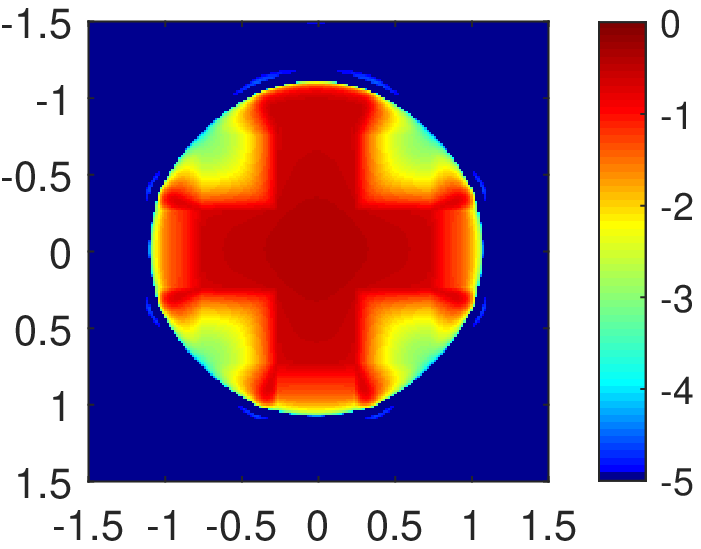}  
\label{fig:transport_nonuniform}}~
\subfloat[reference solution, $\epsilon=10^{-6}$, $t=0.1$][reference solution\\$\epsilon=10^{-3}$]
{  \includegraphics[width=0.3\linewidth]{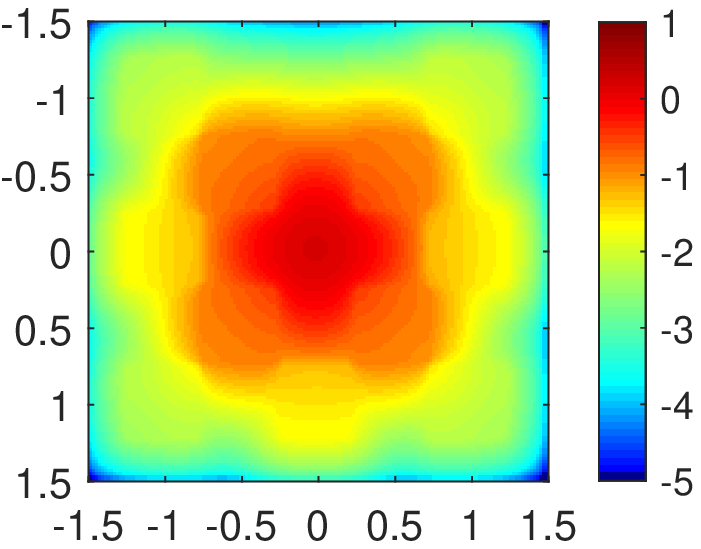}
  \label{fig:diffusion_nonuniform}}\\
  \caption{Problem with non-uniform scattering/absorption cross-sections}
\end{figure}

The run time and relative $L^2$ spatial errors (as defined in \eqref{eq:rel_L2_spat_err}) of the proposed scheme with various limiters are reported in Table~\ref{table:lattice_time_error}.
In the kinetic regime ($\epsilon=1$), the solution from the {\lspw} limiter is still much less accurate than the other solutions. 
The {\optrlx} limiter gives a more accurate solution than the other three limiters, including the expensive {\optpw} limiter.
In the diffusive regime ($\epsilon=10^{-3}$), all limiters gives identical solutions since conditions \eqref{cond:Tx}--\eqref{cond:Dxy2} and the pointwise positivity condition are always satisfied.
Similar to the line source results, the difference in the computational time of the limited and unlimited solutions is more significant in the diffusive regime due to the lower approximation order $N=3$.
Here the computed solutions have relatively large errors in the diffusion regime when compared to the computed diffusion solutions in the line source case.
It follows from \eqref{eq:gamma_max_def} that smaller $\sig{s}^{\min}$ leads to stronger artificial dissipation.
Thus, we suspect that the stronger artificial dissipation introduced in this non-uniform problem ($\sig{s}^{\min}=0.1$) leads to less accurate diffusion solutions than the ones in the line source case ($\sig{s}^{\min}=1$).

To confirm that the computed solutions actually converge to the reference diffusion solution as $\epsilon\to0$, we sample $\epsilon$ from $10^{-3}$ to $10^{-7}$, and report the $L^2$ spatial error $E$ and its convergence order $\nu$ at each sample of $\epsilon$.
Let $\epsilon_i$ denote the samples of $\epsilon$, the order $\nu$ is computed by $\nu := \log\left(\frac{E_{\epsilon_i}}{E_{\epsilon_{i+1}}}\right)\log \left(\frac{\epsilon_i}{\epsilon_{i+1}}\right)^{-1}$, with $E_{\epsilon_i}$ the $L^2$ spatial error when $\epsilon=\epsilon_i$.
The convergence result is reported in Table~\ref{table:lattice_eps_cvgce}, which shows first-order convergence of spatial error.
Since all limiters are effectively inactive when $\epsilon$ is small, we only report one set of the $L^2$ spatial errors in Table~\ref{table:lattice_eps_cvgce}.

\begin{table}[h]
\centering
\begin{tabular}{|| c | c || c | c | c | c | c ||}
\toprule
\multicolumn{2}{||c||}{Limiter} &
\texttt{none} & {\lsrlx} & {\optrlx} & {\lspw} & {\optpw} \\
\hline
{Kinetic} &
 run time   & 363 & 391  & 406 & 483 & 24263 \\
 \cline{2-7}
$(N=11)$ & $E$ 	& 0.092 & 0.090 & 0.083 & 0.501 & 0.132 \\
\hline
  {Diffusive} &
 run time  & 1481  & 11718  &  11922 &  11754 & 11662 \\
 \cline{2-7}
$(N=3)$ & $E$ & 0.218 & 0.218 & 0.218 & 0.218  & 0.218 \\
\bottomrule
\end{tabular}
\caption{Run time (sec) and relative $L^2$ spatial errors at  for the computed solutions without limiter and with the four limiters on the problem with non-uniform scattering/absorption cross-sections in the kinetic regime ($\epsilon=1$, $t_{\rm{final}}=1$) and diffusive regime ($\epsilon=10^{-3}$, $t_{\rm{final}}=1$). In the diffusive regime, the positivity conditions are never violated due to the smoothness of the solution. Thus, all limiters give solutions that are identical to the one without limiter.}
\label{table:lattice_time_error}
\end{table}

\begin{table}[h]
\centering
\begin{tabular}{|| c || c | c | c | c |c ||}
\toprule
{$\epsilon$} &
{$10^{-3}$} & {$10^{-4}$} & {$10^{-5}$} & {$10^{-6}$} & {$10^{-7}$} \\
\hline
{$E$} & 
2.2e-1 & 1.9e-1  & 3.3e-2 & 3.9e-3 & 4.1e-4 \\
\hline
{$\nu$} & 
--- & 0.06 & 0.72 & 0.93 & 0.98 \\
\bottomrule
\end{tabular}
\caption{Convergence of the $L^2$ spatial error as $\epsilon\to 0$ in the diffusion regime at $t_{\rm{final}}=1$. The errors $E$ and the convergence orders $\nu$ are reported for $\epsilon$ sampled between $10^{-3}$ and $10^{-7}$.} 
\label{table:lattice_eps_cvgce}
\end{table}

\subsection{Space-time accuracy}
\label{subsec:space_time}

As a final test, we investigate the order of accuracy of the proposed AP scheme in the kinetic ($\epsilon=1$), transition ($\epsilon=10^{-2}$), and diffusive ($\epsilon=10^{-4}$)%
\footnote{Here we choose $\epsilon=10^{-4}$ for the diffusive regime to ensure that the problem stays in the diffusive regime even when the space-time mesh is refined.} 
regimes.
As in previous tests, we truncate the spatial domain to a $[-1.5,1.5]\times[-1.5,1.5]$ square centered at the origin and impose an artificial zero boundary condition.
The computation is run on uniform square meshes of size $20\times20$ to $160\times160$. 
The reference solutions are computed on a $640\times640$ uniform square mesh.
The final time is $t_{\rm{final}} = 1$, $t_{\rm{final}} = 0.05$, and $t_{\rm{final}} = 0.01$ in the kinetic, transition, and diffusive regimes, respectively.
Since the steep Gaussian initial condition \eqref{eq:Gaussian_ic} may limit the observed convergence order, we test the scheme with a ``gradual'' Gaussian initial condition, which takes the same form as \eqref{eq:Gaussian_ic} with variance $\varsigma^2=5\times10^{-3}$.

All parameter values used in the space-time convergence tests are identical to those listed in Section~\ref{subsec:linesource}, except that we choose the approximation order $N=5$ (instead of 11) for the kinetic and transition tests and $N=3$ for the diffusive tests.
Similar to \eqref{eq:rel_L2_spat_err}, we define the relative $L^2$ space-time error $E_{h}$ by
\begin{equation}
E_{h}:=\|\rho_{h}-\rho_{\textup{ref}}\|_{L^2_{h_{\textup{ref}}}(\bbR^2)}/\|\rho_{\textup{ref}}\|_{L^2_{h_{\textup{ref}}}(\bbR^2)}\:, 
\end{equation}
where $\rho_{h}$ is the particle concentration of the solution computed by the proposed scheme with spatial grid size $h=\dx=\dy$, $\rho_{\textup{ref}}$ is the reference particle concentration, and $h_{\textup{ref}}$ is the grid size of the reference solution.
Since an analytic solution to this problem is not available, we use the computed solutions on a $640\times 640$ spatial mesh as the reference solutions.

Table~\ref{table:space-time_cvgce_L2} reports the $L^2$ space-time errors $E_h$ and observed convergence orders $\nu$ for the proposed scheme with {\lsrlx} and {\optrlx} limiters in the three regimes.
The order $\nu$ is computed by
\begin{equation}
\nu := \log\left(\frac{E_{h_i}}{E_{h_{i+1}}}\right)\log \left(\frac{h_i}{h_{i+1}}\right)^{-1}\:,\quad i=1,\dots,5,
\end{equation}
where $h_i:=\dx=\dy$ is defined by the mesh sizes given in Table~\ref{table:space-time_cvgce_L2}.

The results in Table~\ref{table:space-time_cvgce_L2} show that the observed order of space-time accuracy is between one and two in the kinetic and transition regimes. 
In the diffusive regime, the proposed scheme shows second-order accuracy due to the refined time step.
Also, there is no noticeable difference between the results from the two limiters, since with the gradual Gaussian initial condition the limiters are rarely active. 
Based on the theoretical results for the one-dimensional version of the proposed scheme given in \cite{Laiu-Hauck-2018}, we expect the scheme to be at least first-order accurate in all three regimes when the time step satisfies \eqref{eq:Pos_CFL} and assumption (ii) in Section~\ref{subsec:spatial_discretization_AP}.
The time steps in these tests satisfy both conditions, and the numerical results in Table~\ref{table:space-time_cvgce_L2} confirm the theoretical estimate.

\begin{table}[h]
\scriptsize
\begin{center}
\begin{tabular}{ccccccccccccc}
 & \multicolumn{4}{c}{Kinetic, $\epsilon=1$} 
 & \multicolumn{4}{c}{{Transition, $\epsilon=10^{-2}$}} 
 & \multicolumn{4}{c}{{Diffusive, $\epsilon=10^{-4}$}}\\
   \cmidrule(r){2-5} \cmidrule(r){6-9} \cmidrule(r){10-13} 
 & \multicolumn{2}{c}{{\lsrlx}} & 
   \multicolumn{2}{c}{{\optrlx}}
 & \multicolumn{2}{c}{{\lsrlx}} & 
   \multicolumn{2}{c}{{\optrlx}}
 & \multicolumn{2}{c}{{\lsrlx}} & 
   \multicolumn{2}{c}{{\optrlx}}\\   
   \cmidrule(r){2-3} \cmidrule(r){4-5} \cmidrule(r){6-7}
\cmidrule(r){8-9} \cmidrule(r){10-11} \cmidrule(r){12-13} 
Mesh 
& $E_{h}$      & $\nu$ 
& $E_{h}$      & $\nu$ 
& $E_{h}$      & $\nu$ 
& $E_{h}$      & $\nu$ 
& $E_{h}$      & $\nu$ 
& $E_{h}$      & $\nu$\\ \midrule

 $20^2$ & 4.2e-1 & --- & 4.2e-1 &  ---   & 5.8e-1 & --- & 5.8e-1 &  --- 	& 9.2e-2 & ---  	& 9.2e-2 &  ---  \\ 
 $40^2$ & 2.7e-1 & 0.6 & 2.7e-1 &  0.6   & 4.5e-1 & 0.4 & 4.5e-1 & 0.4 	& 4.2e-2 & 1.1 	& 4.2e-2 &  1.1 \\     	
 $80^2$ & 1.1e-1 & 1.2 & 1.1e-1 &  1.2   & 2.5e-1 & 0.8 & 2.5e-1 & 0.8   	& 7.0e-3 & 2.6 	& 7.0e-3 &  2.6 \\ 
$160^2$ & 2.8e-2 & 2.0 & 2.8e-2 &  2.0   & 7.0e-2 & 1.8 & 7.0e-2 & 1.8   	& 1.8e-3 & 2.0	& 1.8e-3 &  2.0 \\ 
\end{tabular}
\end{center}
\caption{Convergence of space-time errors -- The space-time errors $E_h$ and observed convergence orders $\nu$ are reported. The spatial mesh sizes are listed in the first column. 
We observe at least first order in all three regimes, which confirms the theoretical estimate.
There is no noticeable difference in the results from the two limiters. 
}
\label{table:space-time_cvgce_L2}
\end{table}

\section{Conclusions and discussion}
\label{sec:conclusion}
We have proposed a new positive asymptotic preserving scheme for solving the {\fpn} equations, an approximation to the linear kinetic transport equations, in two space dimensions.
The scheme applies a micro-macro decomposition to the {\fpn} equations and solves the resulting system with a suitable semi-implicit temporal discretization and a special finite difference spatial discretization. 
We give sufficient conditions under which the proposed scheme preserves positivity of particle concentrations and we show that these sufficient conditions are satisfied under a reasonable time-step restriction with the imposition of {\limname} limiters.
We test the proposed scheme on the notoriously difficult line source benchmark problem as well as a multiscale lattice problem. 
Numerical results confirm that in both the kinetic (large mean-free-path) and diffusive (small mean-free-path) regimes, the scheme gives accurate solutions and preserves the nonnegativity of particle concentrations.
The space-time convergence result shows that the accuracy of the proposed scheme is between first and second order in the kinetic and transition regimes, and is second-order in the diffusive regime.

The uniform stability and accuracy analysis of the proposed scheme is presented in \cite{Laiu-Hauck-2018} for the one-dimensional case. 
The analysis indicates that the accuracy of this scheme is limited by the first-order semi-implicit temporal discretization.
To achieve higher order of accuracy, it is possible to implement the proposed finite difference method and the {\limname} limiters together with a second-order implicit-explicit (IMEX) temporal discretization, which has been considered for stiff ordinary differential equations \cite{Chertock-Cui-Kurganov-Wu-2015} and the stiff BGK equation \cite{Hu-Shu-Zhang-2018}.
However, it is known that IMEX schemes requires a restrictive time step either to preserve positivity of the solution \cite{Hu-Shu-Zhang-2018, Higueras-Roldan-2006} or to maintain the strong-stability-preserving (SSP) property for the implicit update \cite{Conde-Gottlieb-Grant-Shadid-2017}.
It is also not clear if IMEX schemes resolve the diffusion limit correctly, since only the Euler limit is considered in \cite{Hu-Shu-Zhang-2018}.
On the other hand, the discontinuous Galerkin (DG) spatial discretization has been used together with the micro-macro decomposition to develop high order AP schemes for the linear kinetic transport equations \cite{Jang-Li-Qiu-Xiong-2015, Jang-Li-Qiu-Xiong-2014} and the BGK equation \cite{Xiong-Jang-Li-Qiu-2015}.
To develop a higher order positive-preserving AP scheme, it is also possible to apply some modified {\limname} limiters on these schemes to enforce positivity.
However, the derivation of physical positivity conditions under the DG spatial discretization is not straightforward.

Other potential future work includes: a rigorous stability and accuracy analysis in the multi-dimensional case for the proposed scheme; an extension of the proposed scheme to three dimensions, where we believe the naive extension suffers from the issue of alternating grids, and a modified extension, such as the one described in Section~\ref{subsec:3D}, is needed; the application of the proposed scheme on other equations, such as the the Vlasov-Poisson equation and the linear Boltzmann equation, where the multiscale behavior and the positivity of the solution are of interest.

{
\appendix
\section{Calculation of diffusion matrices}
\label{appendix:diffusion_calculation}
In this appendix, we provide detailed calculations in the derivation of diffusion matrices $\Qm$ and $\QM$ in \eqref{eq:Qm_def} and \eqref{eq:QM_def}.
We first write the submatrices of $\Qm$ in \eqref{eq:Qm_def} as $\vint{\mM\mm^T\Omega_\alpha} \GammaM \vint{\mm\mm^T\Omega_\beta}$, for $\alpha=x,y$ and $\beta = x,y$. 
For $\alpha=x,y$, let $k_\alpha$ denote the index such that $c_\alpha\mm_{k_\alpha}= \Omega_\alpha$ with some nonzero constant $c_\alpha\in\bbR$.
We then have
\begin{equation}
\begin{alignedat}{2}
\vint{\mM\mm^T\Omega_\alpha} \GammaM \vint{\mm\mm^T\Omega_\beta} &= \mM \vint{\mm^T c_\alpha\mm_{k_\alpha}} \GammaM \vint{\mm\mm^T\Omega_\beta}
= \mM c_\alpha \GammaM_{k_\alpha,k_\alpha} \vint{\mm_{k_\alpha}\mm^T \Omega_\beta}\\
&= \GammaM_{k_\alpha,k_\alpha} \vint{\mM \mm^T c_\alpha \mm_{k_\alpha} \Omega_\beta} = \GammaM_{k_\alpha,k_\alpha} \vint{\mM \mm^T \Omega_\alpha \Omega_\beta}\:,
\end{alignedat}
\end{equation}
where the second equality follows from the fact that since entries of $\mm$ are orthonormal, $\vint{\mm^T \mm_{k_\alpha}}$ is a vector of all zeros except its $k_\alpha$-th entry, which takes value one.
Further, we observe that $\G_{k_x,k_x}=\G_{k_y,k_y}$, which follows directly from the definition of $\G$ in \eqref{eq:Gamma_def} and the definition of the filtering matrix $F$ introduced in \eqref{eq:fpn}.
Thus, we denote $\g:=\G_{k_x,k_x}=\G_{k_y,k_y}$ in \eqref{eq:Qm_def} and \eqref{eq:QM_def}.
The equalities in \eqref{eq:Qm_def} is now verified, and the equalities in \eqref{eq:QM_def} can be shown similarly.

\section{Lower bound of the artificial dissipation operator}
\label{appendix:minmod_bound}

In this appendix, we prove a lower bound needed in \eqref{eq:ddx_bound} in the proof of Theorem~\ref{thm:positivity}.
Specifically we show that for any nonnegative function $w$ defined on the spatial mesh,
\begin{equation}
\ddx(\wMc) \geq \frac{1}{2\dx^4} \left(\frac{1}{\Theta} \wMe - 4 \wMc + \frac{1}{\Theta}\wMw\right)\:,
\label{eq:minmod_lower_bound}
\end{equation}
where $\ddx$ is the artificial dissipation operator defined in \eqref{eq:ddx_def} and $\Theta=\frac{1}{2-\theta}$.
From \eqref{eq:ddx_def} and \eqref{eq:ddx_edge_values},
\begin{equation}
\dx^4\ddx(\wMc)= \wMe - \frac{\dx}{2} s_{i+1,j}^x - 2\wMc + \wMw + \frac{\dx}{2} s_{i-1,j}^x\:,
\label{eq:minmod_pf_1}
\end{equation}
where the slope $s_{i,j}^x$ is defined in \eqref{eq:slope_minmod}.
To verify \eqref{eq:minmod_lower_bound}, we first consider the case that $\wMe \geq \wMc \geq \wMw$, which, together with \eqref{eq:slope_minmod}, leads to
\begin{equation}
s_{i+1,j}^x\leq \theta\frac{\wMe - \wMc}{\dx} \quand s_{i-1,j}^x\geq 0\:.
\end{equation}
Thus, \eqref{eq:minmod_pf_1} gives that when $\wMe \geq \wMc \geq \wMw$,
\begin{equation}
\dx^4\ddx(\wMc) \geq \left(1-\frac{\theta}{2}\right)\wMe - \left(2-\frac{\theta}{2}\right)\wMc + \wMw\:. 
\end{equation}
By applying similar arguments on other cases, we show that $\dx^4\ddx(\wMc)$ is bounded from below by
\begin{equation}
\begin{cases}
 \left(1-\frac{\theta}{2}\right) \wMe - \left(2-\frac{\theta}{2}\right)\wMc + \wMw\:,  &\text{if } \wMe \geq \wMc \geq \wMw\\
 \left(1-\frac{\theta}{2}\right) \wMe - \left(2-\theta\right)\wMc + \left(1-\frac{\theta}{2}\right)\wMw\:,  &\text{if } \wMe \geq \wMc\:, \wMc< \wMw\\
 \wMe - 2\wMc + \wMw\:,  &\text{if } \wMe < \wMc\:, \wMc \geq \wMw\\
 \wMe - \left(2-\frac{\theta}{2}\right)\wMc + \left(1-\frac{\theta}{2}\right)\wMw\:,  &\text{if } \wMe < \wMc < \wMw
\end{cases}\:.
\end{equation}
Since $\wMe$, $\wMc$, and $\wMw$ are all nonnegative and $\theta\in(1,2)$, we conclude that \eqref{eq:minmod_lower_bound} holds.

}

\bibliographystyle{siamplain}
\bibliography{PFPN_ref}

\end{document}